\theoremstyle{plain}
\newtheorem{theorem}{Theorem}
\newtheorem{lemma}{Lemma} 
\newtheorem{proposition}[lemma]{Proposition} 
\newtheorem*{corollary*}{Corollary}
\newtheorem{theoremO}{Theorem}
\newtheorem{lemmO}[theoremO]{Lemma}
\newtheorem*{conjecture*}{Conjecture}
\theoremstyle{definition}
\theoremstyle{remark}
\newcommand{\SC}{{\mathbb C}}  \newcommand{\SD}{{\mathbb D}}  
\newcommand {\SR}{{\mathbb R}}  \newcommand{\ST}{{\mathbb T}}  
\newcommand{\al}{\alpha}  \newcommand{\ga}{\gamma} \newcommand{\de}{\delta} 
    \newcommand{\ze}{\zeta}
\def\t{\theta}    \newcommand{\la}{\lambda}
\newcommand{\si}{\sigma}  \newcommand{\vp}{\varphi}  \newcommand{\om}{\omega}
\newcommand{\De}{\Delta}  \newcommand{\Om}{\Omega}  
\newcommand{\cA}{{\mathcal A}}   
 \newcommand{\cF}{{\mathcal F}}
\newcommand{\be}{\begin{equation}}
\newcommand{\ee}{\end{equation}}
\newcommand{\bea}{\begin{eqnarray}}
\newcommand{\eea}{\end{eqnarray}}
\def\le{\left}
\def\ri{\right}
\begin{document}

\title[Criteria for univalence and quasiconformal extension]{Criteria for univalence and quasiconformal extension for harmonic mappings on planar domains}  

\author[I. Efraimidis]{Iason Efraimidis}
\subjclass[2010]{30C55, 30C62, 31A05}

\keywords{harmonic mappings, Schwarzian derivative, univalence criterion, quasiconformal extension}
\address{Department of Mathematics and Statistics, Texas Tech University, Box 41042, Lubbock, TX 79409, United States} \email{iason.efraimidis@ttu.edu}  

\maketitle

\begin{abstract}
If $\Om$ is a simply connected domain in $\overline{\SC}$ then, according to the Ahlfors-Gehring theorem, $\Om$ is a quasidisk if and only if there exists a sufficient condition for the univalence of holomorphic functions in $\Omega$ in relation to the growth of their Schwarzian derivative. We extend this theorem to harmonic mappings by proving a univalence criterion on quasidisks. We also show that the mappings satisfying this criterion admit a homeomorphic extension to $\overline{\SC}$ and, under the additional assumption of quasiconformality in $\Om$, they admit a quasiconformal extension to $\overline{\SC}$. 

The Ahlfors-Gehring theorem has been extended to finitely connected domains $\Om$ by Osgood, Beardon and Gehring, who showed that a Schwarzian criterion for univalence holds in $\Om$ if and only if the components of $\partial\Om$ are either points or quasicircles. We generalize this theorem to harmonic mappings.
\end{abstract}

\section{Introduction}  

\subsection{Schwarzian derivative} For a locally univalent analytic function $f$ the Schwarzian derivative is defined by
$$ 
Sf \, = \, \left( f'' /f' \right)' -\tfrac{1}{2} \left( f'' /f'  \right)^2. 
$$
This operator vanishes identically if and only if $f$ is a M\"obius transformation. According to a classical theorem of Nehari the bound 
\be \label{Neh-t-AhlWe}
|Sf(z)| \leq \frac{2\,t}{(1-|z|^2)^2} \, , \qquad z\in\SD,
\ee
for $t=1$ implies the global univalence of $f$ in the unit disk $\SD$, while another classical result, proved by Ahlfors and Weill, gives an explicit quasiconformal extension of $f$ to $\overline{\SC}$ under the assumption that $f$ satisfies \eqref{Neh-t-AhlWe} with $t<1$. 

Let $\Om$ be a hyperbolic domain in $\overline{\SC}$, meaning that it has at least three boundary points, and let $\pi:\SD\to\Om$ be a universal covering map. Then the density $\la_\Om$ of the hyperbolic (Poincar\'e) metric in $\Om$ is defined by 
$$
\la_\Om\big(\pi(z)\big) |\pi'(z)| \, = \, \la_\SD(z)\, = \,\frac{1}{1-|z|^2}, \qquad z\in \SD, 
$$
and is independent of the choice for the covering $\pi$. The size of the Schwarzian derivative of a locally univalent holomorphic function $f:\Om\to\SC$ is measured by its norm, given by 
\be \label{def-Schw-norm}
\|Sf\|_\Om \, = \, \sup_{z\in\Om} \, \la_\Om(z)^{-2} |Sf(z)|.
\ee 
The inner radius of $\Om$ is defined as the number
$$
\si(\Om) \, = \, \sup \{\, c\geq0 \, : \, \, \|Sf\|_\Om \leq c \,\Rightarrow \, f \, \text{univalent} \}.  
$$
This domain constant is M\"obius invariant. We say that a univalence criterion holds in $\Om$ if and only if $\sigma(\Om)>0$. We have that $\si(\SD)=2$ since, as shown by Hille, the constant 2 in Nehari's theorem is sharp. Lehtinen showed that every simply connected domain $\Om$ satisfies $\si(\Om) \leq 2$, with equality only in the case when $\Om$ is a disk or a half-plane. See Lehto's book \cite[Ch.III, \S 5]{Leh} for more information on the inner radius.
 
A domain $\Om$ is a quasidisk if it is the image of $\SD$ under a quasiconformal self-map of $\overline{\SC}$. The boundary of a quasidisk is called a quasicircle. 

Our starting point is the following theorem by Ahlfors and Gehring. 

\begin{theoremO}[\cite{Ah,Ge}] \label{thm-inner-radius-Ah-Ge}
Let $\Om$ be a simply connected domain in $\overline{\SC}$. Then $\si(\Om) >0$ if and only if $\Om$ is a quasidisk. Moreover, any holomorphic function $f:\Om\to\SC$ that satisfies $\|Sf\|_\Om <\si(\Om)$ admits a quasiconformal extension to $\overline{\SC}$. 
\end{theoremO}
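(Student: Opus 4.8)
The plan is to prove the two implications of the equivalence separately, and then the explicit extension, following in spirit the original arguments of Ahlfors and of Gehring.

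\emph{Quasidisk $\Rightarrow$ univalence criterion, with extension.} Suppose $\Om$ is a quasidisk and put $\Om^*=\overline{\SC}\setminus\overline{\Om}$, which is again a quasidisk. The basic tool is a quasiconformal reflection $\la$ in the quasicircle $\partial\Om$: an orientation-reversing quasiconformal involution of $\overline{\SC}$ with $\la(\Om)=\Om^*$ and $\la|_{\partial\Om}=\mathrm{id}$, chosen moreover to be smooth off $\partial\Om$ and to satisfy second-order estimates near $\partial\Om$ (such ``good'' reflections exist for any quasicircle, with data depending only on its dilatation). Given a locally univalent holomorphic $f:\Om\to\SC$ with $\|Sf\|_\Om$ small, I would extend it to $\Om^*$ by sending each $w\in\Om^*$ to the value at $w$ of the M\"obius transformation osculating $f$ to second order at the reflected point $\zeta=\la(w)\in\Om$; for $\Om=\SD$ and $\la(z)=1/\bar z$ this is exactly the Ahlfors-Weill formula. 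Writing $F$ for the resulting map, with $F=f$ on $\Om$, one computes its Beltrami coefficient $\mu_F$ on $\Om^*$; since $F$ is globally M\"obius when $Sf\equiv0$, the estimate produces $\|\mu_F\|_\infty\le k<1$ as soon as $\|Sf\|_\Om$ lies below a threshold $c(\Om)>0$ depending only on the reflection. Then $F$ is a quasiregular local homeomorphism of $\overline{\SC}$ (the quasicircle $\partial\Om$ has zero area, so $F$ is quasiregular across it), whence by the measurable Riemann mapping theorem $F=G\circ\Phi$ with $\Phi$ a quasiconformal self-map of $\overline{\SC}$ carrying $\mu_F$ and $G$ meromorphic, hence rational, on $\overline{\SC}$; being locally injective, $G$ is a M\"obius transformation, so $F$ is a homeomorphism of $\overline{\SC}$. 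In particular $f$ is univalent, so $\si(\Om)\ge c(\Om)>0$, and $F$ is a quasiconformal extension of $f$.

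\emph{From $c(\Om)$ up to $\si(\Om)$.} The preceding argument yields an extension only for $\|Sf\|_\Om<c(\Om)$, possibly strictly less than $\si(\Om)$. To reach all $f$ with $\|Sf\|_\Om<\si(\Om)$ I would follow Gehring and Osgood and first show that such an $f$ already maps $\Om$ onto a quasidisk, with dilatation controlled by $\si(\Om)-\|Sf\|_\Om$, via a deformation argument along the path $t\mapsto tSf$, $t\in[0,1]$, using that univalence is open along this family and that suitable limits of univalent maps onto uniform quasidisks are of the same kind; reflecting in $f(\partial\Om)$ then gives the extension.

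\emph{Univalence criterion $\Rightarrow$ quasidisk.} Here I would argue the contrapositive: if $\Om$ is not a quasidisk then $\si(\Om)=0$, i.e. there are holomorphic $f$ on $\Om$ with $\|Sf\|_\Om$ arbitrarily small that are not univalent. If $\partial\Om$ is not a Jordan curve in $\overline{\SC}$ this is immediate by a local transplantation. If $\partial\Om$ is a Jordan curve but not a quasicircle, then by Ahlfors' three-point characterization there are boundary triples $a_n,b_n,c_n$, with $c_n$ separating $a_n$ from $b_n$ on $\partial\Om$, for which $(|a_n-c_n|+|c_n-b_n|)/|a_n-b_n|\to\infty$. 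I would exploit these degenerating configurations by transplanting to $\Om$ explicit test maps built from M\"obius changes of variable and a power $w\mapsto w^{1+i\tau_n}$ with small $\tau_n$, tuning the parameters so that $\|Sf_n\|_\Om\to0$ while $f_n(\Om)$ overlaps itself because the bad triple forces the relevant logarithmic region to wrap around. This is the step in which the geometric nature of quasicircles is genuinely used.

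\emph{Main obstacle.} The technical heart is the Beltrami estimate in the first paragraph: one must produce a reflection regular enough that the osculating extension has dilatation of size $O(\|Sf\|_\Om)$ plus a contribution controlled purely by the reflection, so that the strict bound $k<1$ survives for all small enough Schwarzians --- this is precisely Ahlfors' construction and analysis of ``nice'' quasiconformal reflections. On the converse side, the delicate point is the choice of test functions and the verification that their Schwarzian norm, measured against $\la_\Om$, really tends to zero along a degenerating sequence of triples, which rests on sharp two-sided estimates for $\la_\Om$ near $\partial\Om$ in terms of Euclidean distance.
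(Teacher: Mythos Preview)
The paper does not prove this statement. Theorem~A is quoted as a classical result of Ahlfors and Gehring, accompanied only by the one-line attribution ``Ahlfors \cite{Ah} proved the quasiconformal extension criterion on a quasidisk stated here; the univalence criterion follows from it. The fact that such a criterion can only occur on a quasidisk was shown by Gehring \cite{Ge}.'' There is therefore no proof in the paper to compare your proposal against; the theorem is used as input for the paper's new results on harmonic mappings.

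Assessing your sketch on its own: the forward direction via an Ahlfors--Weill type osculation built on a smooth quasiconformal reflection is exactly Ahlfors' argument, and your outline is sound. Your step ``from $c(\Om)$ up to $\si(\Om)$'' is unnecessarily vague, however. Once both implications are in hand, the clean argument is this: if $\|Sf\|_\Om<\si(\Om)$ then for every locally univalent $g$ on $f(\Om)$ with $\|Sg\|_{f(\Om)}\leq \si(\Om)-\|Sf\|_\Om$ the chain rule gives $\|S(g\circ f)\|_\Om\leq\si(\Om)$, so $g\circ f$ and hence $g$ is univalent; thus $\si(f(\Om))>0$, so $f(\Om)$ is a quasidisk by the converse direction, and a conformal map between quasidisks extends quasiconformally. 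No deformation along $t\mapsto tSf$ is needed.

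For the converse, your contrapositive with explicit test maps built from powers $w^{1+i\tau_n}$ along degenerating Ahlfors triples is plausible but is not Gehring's route in \cite{Ge}; his argument is more indirect, going through growth estimates on the Riemann map and the relation between the Schwarzian norm and quasiconformal extendability already established in the forward direction. Your sketch would require genuine work to make precise, particularly the claimed control of $\|Sf_n\|_\Om$ against $\la_\Om$ near a boundary which, by hypothesis, fails every uniform-domain estimate.
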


Ahlfors \cite{Ah} proved the quasiconformal extension criterion on a quasidisk stated here; the univalence criterion follows from it. The fact that such a criterion can only occur on a quasidisk was shown by Gehring \cite{Ge}. This has been generalized to a class of multiply connected domains with the following theorem.

\begin{theoremO}[\cite{BG,O80}]  \label{thm-Ge-Os}
Let $\Om$ be a finitely connected domain in $\overline{\SC}$. Then $\si(\Om) >0$ if and only if every boundary component of $\Om$ is either a point or a quasicircle.
\end{theoremO}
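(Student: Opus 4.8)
The plan is to prove the two implications separately. The forward implication — $\si(\Om)>0$ forces every boundary component to be a point or a quasicircle — is a localization argument that reduces to Gehring's half of Theorem~\ref{thm-inner-radius-Ah-Ge}. The reverse implication is the substantial one, and is where the work of Beardon--Gehring and of Osgood really enters.

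For necessity I would argue by contraposition. Suppose some boundary component $\Gamma$ is a nondegenerate continuum that is not a quasicircle, and let $D$ be the component of $\overline{\SC}\setminus\Gamma$ containing $\Om$. Then $D$ is simply connected with $\partial D=\Gamma$, so by Theorem~\ref{thm-inner-radius-Ah-Ge} it is not a quasidisk and $\si(D)=0$; hence there are holomorphic $f_k:D\to\SC$ with $\|Sf_k\|_D\to 0$ that fail to be univalent. Since the failure of the quasidisk condition for $D$ is a boundary phenomenon localized at $\Gamma$, Gehring's construction can be arranged so that each $f_k$ identifies a pair of points tending to $\Gamma$. Now $\Om=D\setminus K$, where $K$ is the union of the other complementary components of $\Om$, a compact set at positive distance from $\Gamma$; hence these pairs lie in $\Om$ for large $k$. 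Finally, $\Om\subset D$ gives $\la_D\leq\la_\Om$ on $\Om$, so $\|Sf_k\|_\Om\leq\|Sf_k\|_D\to 0$, and therefore $\si(\Om)=0$.

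For sufficiency, assume every boundary component of $\Om$ is a point or a quasicircle. I would first dispose of the degenerate components: since $|z-p|\,\la_\Om(z)\to 0$ as $z\to p$ at an isolated boundary point $p$, finiteness of $\|Sf\|_\Om$ forces $(z-p)^2Sf(z)\to 0$, so $Sf$ has at most a simple pole at $p$, and single-valuedness of $f$ then makes $f$ extend holomorphically and locally univalently across $p$. Splitting the supremum defining $\|Sf\|_{\Om^*}$ into a fixed punctured neighborhood of $p$ (where the maximum principle applies to the now-holomorphic $Sf$) and its complement (where $\la_{\Om^*}\asymp\la_\Om$) yields $\|Sf\|_{\Om^*}\leq C(\Om)\,\|Sf\|_\Om$, where $\Om^*$ denotes $\Om$ with all its isolated boundary points filled in. So it suffices to treat the case where every boundary component is a quasicircle; then $\Om=\bigcap_{j=1}^{n}\Om_j$, where $\Om_j$ is the quasidisk bounded by $\Gamma_j$ that contains $\Om$, and by Theorem~\ref{thm-inner-radius-Ah-Ge} there is $\delta>0$, independent of $j$, with $\si(\Om_j)\geq 2\delta$.

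The remaining step — which I expect to be the main obstacle — is to pass from the univalence criteria on the individual quasidisks $\Om_j$ to one on their intersection $\Om$. The naive route, lifting $f$ through a universal covering $\pi:\SD\to\Om$ and applying Nehari's theorem, fails: $\|S(f\circ\pi)\|_\SD\leq\|Sf\|_\Om+\|S\pi\|_\SD$ with $\|S\pi\|_\SD$ a fixed positive constant, so univalence of $f\circ\pi$ on $\SD$ is strictly stronger than univalence of $f$ on $\Om$ and discards the multiply connected structure. One must instead argue inward from the boundary: the quantitative form of Theorem~\ref{thm-inner-radius-Ah-Ge} on each $\Om_j$ (small Schwarzian norm makes $f$ close to M\"obius, not merely univalent) shows that, once $\|Sf\|_\Om$ is small in terms of $n$, $\delta$ and a Poincar\'e comparison constant $A(\Om)$ with $\la_\Om\leq A\max_j\la_{\Om_j}$, the map $f$ is injective on a definite $\Om$-neighborhood of each $\Gamma_j$; a monodromy and compactness argument on the remaining compact core — using that $f$ is univalent on every simply connected subdomain of $\Om$ on which its Schwarzian norm is small, so that any two points identified by $f$ must be separated by a hole of $\Om$ — then upgrades this to global univalence of $f$ on $\Om$. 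Making this local-to-global passage quantitative, that is, controlling how the sheets of $f$ can fail to match across the holes of $\Om$, is precisely the content of \cite{BG,O80}: Beardon and Gehring carry it out via the Bergman kernel and the span of $\Om$, Osgood via estimates on $f''/f'$ integrated along hyperbolic geodesics.
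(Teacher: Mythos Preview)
The paper does not itself prove Theorem~\ref{thm-Ge-Os}; it is quoted from the literature. What the paper does expose, however, is Osgood's key device for the sufficiency direction --- Lemma~\ref{lem-qc-decomp} (quasiconformal decomposition) --- and then uses it verbatim in Section~\ref{sect-fin-conn} to prove the harmonic analogue. That proof specializes immediately to the holomorphic case and is essentially Osgood's own argument.

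Your sufficiency sketch misses this idea and, as a result, is both more complicated and incomplete. You write $\Om=\bigcap_j \Om_j$ with each $\Om_j$ a quasidisk \emph{containing} $\Om$, and then try a local-to-global argument (univalence near each $\Gamma_j$, then a monodromy/compactness step on a core). Osgood's decomposition goes the other way: one produces a \emph{finite} collection $\mathfrak{D}$ of quasidisks $D$ \emph{contained in} $\Om$ such that any two points of $\Om$ lie in some $\overline{D}$. Domain monotonicity now works in your favour, $\la_D\geq\la_\Om$ on $D$, so $\|Sf\|_D\leq\|Sf\|_\Om$. Choosing $c\leq\min_{D\in\mathfrak{D}}\si(D)$ and invoking Theorem~\ref{thm-inner-radius-Ah-Ge} (which gives a quasiconformal, hence homeomorphic, extension of $f|_D$ to $\overline{\SC}$) makes $f$ injective on each $\overline{D}$, and the covering property finishes the proof in one line. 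There is no need to remove isolated boundary points first, no Poincar\'e comparison constant $A(\Om)$, and no ``compact core'' step. Your description of Osgood's method as ``estimates on $f''/f'$ integrated along hyperbolic geodesics'' does not match what the paper records; the decomposition lemma is the whole story. You also have the attributions reversed: Osgood proved sufficiency, while Beardon--Gehring proved necessity.

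Your necessity sketch also has a real gap. From $\si(D)=0$ you get nonunivalent $f_k$ on $D$ with $\|Sf_k\|_D\to 0$, but you then assert that the identified pairs can be taken to accumulate on $\Gamma$, citing only that ``the failure of the quasidisk condition is a boundary phenomenon''. Nothing in Gehring's proof of Theorem~\ref{thm-inner-radius-Ah-Ge} hands you this for free; the pair of points where $f_k$ fails to be injective could in principle stay in the compact set $K$ you removed. Beardon and Gehring's argument in \cite{BG} is not a localization of Gehring's simply connected construction but a separate argument using the kernel function, and it is needed precisely to handle this point.
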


The univalence criterion was proved by Osgood \cite{O80}, while the fact that the class of domains where it holds cannot be enlarged was proved by Beardon and Gehring \cite{BG}. We note that for finitely connected domains the property of $\partial\Om$ described in this theorem characterizes the class of uniform domains, introduced by Martio and Sarvas \cite{MS79} (see also \cite[\S3.5]{GeHa}). We also mention that there exist both examples of infinitely connected circle domains with positive (see \cite{O80}) and zero (see \cite{BG}) inner radius. 

In recent years there is much activity in extending the theory of the Schwarzian derivative to harmonic mappings, to which we now turn.

\subsection{Harmonic mappings} A complex-valued harmonic mapping $f$ in a simply connected domain $\Omega\subset\SC$ has a canonical decomposition $f=h+\overline{g}$, where $h$ and $g$ are analytic in $\Om$. The mapping $f$ is locally univalent if and only if its Jacobian $J_f=|h'|^2-|g'|^2$ does not vanish, and is said to be orientation-preserving if its dilatation $\omega=g'/h'$ satisfies $|\omega|<1$ in $\Om$. We say that $f$ is normalized if $h(z_0)=g(z_0)=0$ and $h'(z_0)=1$ for some specified $z_0\in\Omega$.

The Schwarzian derivative has been extended to harmonic mappings by two complementary definitions: a first one appeared in \cite{CDO03} and another was later introduced by Hern\'andez and Mart\'in in \cite{HM15}. We will follow the latter, which seems to be better suited when one does not wish to consider the Weierstarss-Enneper lift to a minimal surface. Hence, the Schwarzian derivative of a locally univalent harmonic mapping $f$ is defined by 
$$
S_f \, = \, \rho_{zz} - \tfrac{1}{2} (\rho_z)^2, \qquad \rho =\log J_f,
$$
where $J_f=|f_z|^2-|f_{\overline{z}}|^2$ is the Jacobian. If $\Om$ is simply connected and, therefore, the decomposition $f=h+\overline{g}$ is valid, then the above takes the form  
\be \label{Sch-HM}
S_f \, = \, Sh + \frac{\overline{\om}}{1-|\om|^2} \left( \frac{h''}{h'} \om'-\om'' \right) - \frac{3}{2} \left( \frac{\om' \overline{\om}}{1-|\om|^2} \right)^2. 
\ee
Note that we are using the notation $Sf$ when we know that the mapping $f$ is holomorphic and the notation $S_f$, with the mapping $f$ as a subscript, in the more general setting of harmonic mappings. This operator satisfies the chain rule, for if $\vp$ is analytic in $\Om$ and such that the composition $f\circ \vp$ is well defined then we have that  
\be \label{eq-chain}
S_{f\circ \vp} \, = \, S_f\circ \vp \, (\vp')^2 + S\vp. 
\ee
The Schwarzian norm of a harmonic mapping $f$ is defined exactly as in \eqref{def-Schw-norm}. It was shown in \cite{HM15} that $\|S_f\|_\Om=0$ implies that $f$ is an affine map of a M\"obius transformation and is, therefore, univalent.

\subsection{Main results} We define the \emph{harmonic inner radius} of a hyperbolic domain $\Om$ in $\overline{\SC}$ as the constant 
$$
\si_H(\Om) \, = \, \sup \{ \, t \geq0 \, : \, f \; \text{harmonic in}\; \Om \; \text{with}  \; \|S_f\|_\Om\leq t \, \Rightarrow \, f \, \text{univalent} \}. 
$$
Evidently,  
\be \label{harm-inn-less-inn}
\si_H(\Om) \leq \si(\Om),
\ee
since every holomorphic function is harmonic. This shows that if $\si_H(\Om)>0$ and $\Om$ is finitely connected then, in view of Theorem~\ref{thm-Ge-Os}, every boundary component of $\Om$ is either a point or a quasicircle. For $\Om=\SD$ it was shown in \cite{HM15-2} that $\si_H(\SD)>0$. We prove that the harmonic inner radius is positive for all quasidisks.  

\begin{theorem}\label{thm-harmonic-in-rad}
Let $\Om$ be a quasidisk. Then there exists a constant $c>0$, depending only on $\si(\Om)$, such that if $f$ is harmonic in $\Om$ with $\|S_f\|_\Om \leq c$ then $f$ is univalent in $\Om$ and admits a homeomorphic extension to $\overline{\SC}$. 
\end{theorem}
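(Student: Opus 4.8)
The natural strategy is transplantation via the Riemann map, reducing the quasidisk case to the known result on $\SD$ from \cite{HM15-2}. Let $\psi:\SD\to\Om$ be a Riemann map; since $\Om$ is a quasidisk, $\psi$ extends to a quasiconformal self-map of $\overline{\SC}$, and in particular extends homeomorphically to $\overline{\SD}$. Given a harmonic $f:\Om\to\SC$ with $\|S_f\|_\Om\leq c$, form the pullback $F=f\circ\psi$, which is harmonic in $\SD$. By the chain rule \eqref{eq-chain},
$$
S_F \, = \, S_f\circ\psi\,(\psi')^2 + S\psi,
$$
so, passing to hyperbolic norms and using that $\|S_f\circ\psi\,(\psi')^2\|_\SD=\|S_f\|_\Om$ (conformal invariance of the norm under the covering/Riemann map) together with $\|S\psi\|_\SD\leq 6$ (the standard bound $\|Sg\|_\SD\le 6$ for univalent $g$), we get $\|S_F\|_\SD\leq c+6$. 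This is the wrong inequality: we need $\|S_F\|_\SD$ small, not merely bounded, whereas $S\psi$ contributes a fixed amount of order $1$. This is the crux of the difficulty.

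The resolution is that the univalence criterion on $\SD$ from \cite{HM15-2} should be applied not to $F$ directly but in a form that absorbs the Schwarzian of the conformal change of variables — i.e. one needs the statement ``if $F$ is harmonic on $\SD$ with $\|S_F - S\psi\|_\SD$ small (equivalently $S_F = S\psi + $ small), then $F$ is univalent'', or better, a relative version: the hypothesis $\|S_f\|_\Om\le c$ with $c<\si(\Om)$ forces, via Ahlfors--Gehring (Theorem \ref{thm-inner-radius-Ah-Ge}), good control on the analytic part. Concretely, I would write $f=h+\overline g$ locally, note that the holomorphic map $h$ has $\|Sh - \text{(lower-order $\om$-terms)}\|$ controlled by $\|S_f\|_\Om$ via \eqref{Sch-HM}, and then transplant: the map $H=h\circ\psi$ satisfies $SH = Sh\circ\psi(\psi')^2 + S\psi$. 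Since $\Om$ is a quasidisk with $\si(\Om)>0$ and $\|Sh\|$-type quantities are small, $H$ is univalent with a quasiconformal extension by Theorem \ref{thm-inner-radius-Ah-Ge}; one must check the dilatation $\Om_F = \overline{\om\circ\psi}$ of $F$ satisfies $\|\om\|_\infty<1$, which follows because \eqref{Sch-HM} shows that if $|\om|$ were close to $1$ somewhere then $\|S_f\|_\Om$ would blow up (the term $\om'\overline\om/(1-|\om|^2)$), so a bound $\|S_f\|_\Om\le c$ with $c$ small yields a uniform bound $|\om|\le k<1$ depending only on $c$ and $\si(\Om)$.

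With these pieces in hand the argument closes as follows. First, from $\|S_f\|_\Om\le c$ extract (i) $|\om|\le k<1$ in $\Om$ and (ii) a bound on the holomorphic Schwarzian data that, after transplanting by $\psi$ and invoking Theorem \ref{thm-inner-radius-Ah-Ge} on the quasidisk, shows the ``dominant'' holomorphic part of $F=f\circ\psi$ is univalent on $\SD$ with controlled (quasiconformal) boundary behavior. Second, feed the transplanted mapping $F$ into the disk criterion of \cite{HM15-2}: since the excess Schwarzian contributed by $S\psi$ is the Schwarzian of a univalent function on a quasidisk and hence ``admissible,'' the relative smallness of $\|S_f\|_\Om$ translates into $F$ satisfying the hypotheses of the $\SD$-criterion, giving univalence of $F$ on $\SD$ and hence of $f=F\circ\psi^{-1}$ on $\Om$. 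Third, for the homeomorphic extension to $\overline{\SC}$: $F$ univalent and harmonic on $\SD$ with small Schwarzian norm extends homeomorphically to $\overline{\SD}$ (this should be part of, or a consequence of, the $\SD$-result — small Schwarzian norm gives a John/quasidisk-type image domain), and since $\psi$ extends homeomorphically to $\overline{\SD}\to\overline{\Om}$ as a restriction of a quasiconformal map of $\overline{\SC}$, the composition $f=F\circ\psi^{-1}$ extends homeomorphically to $\overline{\Om}$, and then to all of $\overline{\SC}$ by gluing with a quasiconformal extension of $\psi$. The main obstacle, as flagged, is item (ii): making precise how a bound on the \emph{harmonic} Schwarzian $\|S_f\|_\Om$ (a nonlinear combination of $Sh$, $\om$, and derivatives of $\om$ per \eqref{Sch-HM}) yields a bound of the form $\|S(h\circ\text{stuff})\|<\si(\Om)$ for an honestly \emph{holomorphic} auxiliary function to which Ahlfors--Gehring applies; one likely needs to work with $h$ composed with a suitable quasiconformal (not conformal) change of variables absorbing $\om$, following the Ahlfors--Weill/Hernández--Martín scheme, and to keep all constants depending only on $\si(\Om)$.
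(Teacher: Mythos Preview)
Your proposal has two genuine gaps that prevent it from closing.

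First, the claim that small $\|S_f\|_\Om$ forces a uniform bound $|\om|\le k<1$ is false. The affine maps $f(z)=z+a\overline{z}$ with $|a|<1$ have constant dilatation $\om\equiv a$, hence $\om'\equiv 0$, so every $\om$-term in \eqref{Sch-HM} vanishes and $S_f\equiv 0$; yet $|\om|=|a|$ can be as close to $1$ as you like. What \emph{is} controlled by small $\|S_f\|_\Om$ is the hyperbolic derivative $\om^*=\om'/\big(\la_\Om(1-|\om|^2)\big)$, and this is precisely the quantity $R_t(\Om)$ the paper isolates (Lemma~\ref{lem-R-lambda-Om}). Without a bound on $|\om|$ your step (i) collapses, and so does the part of your plan that invokes quasiconformality of $F$ or of any ``$h$ composed with a quasiconformal change of variables absorbing $\om$''.

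Second, the transplantation route cannot be salvaged in the form you describe. You correctly identify that $\|S_F\|_\SD\le c+6$ is useless, but the proposed ``relative version'' of the disk criterion---$\|S_F-S\psi\|_\SD$ small implies $F$ univalent---is not available in \cite{HM15-2} and would in fact be equivalent to the theorem you are trying to prove. Relatedly, you assume the homeomorphic extension to $\overline{\SC}$ is ``part of, or a consequence of, the $\SD$-result'' in \cite{HM15-2}; it is not. That paper gives univalence, and quasiconformal extension only under the extra hypothesis $\sup|\om|<1$. The homeomorphic extension without a bound on $|\om|$ is new even for $\SD$ and requires an argument.

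The paper avoids both problems by working intrinsically in $\Om$. From \eqref{Sch-HM} one bounds $|Sh|$ by $|S_f|$ plus terms involving $|\om^*|$, $|h''/h'|$, and $|\om''|/(1-|\om|^2)$; Propositions~\ref{general-Pommerenke} and \ref{prop-general-omega} together with \eqref{Koebe-comparison} turn these into $\|Sh\|_\Om\le t+\hat{C}R_t(\Om)$, and Lemma~\ref{lem-R-lambda-Om} gives $R_t(\Om)\to 0$. Then Theorem~\ref{thm-inner-radius-Ah-Ge} is applied \emph{directly on $\Om$} to $h$ and, by affine invariance, to each $h_a=h+ag$, $a\in\overline{\SD}$, yielding univalence of $f$ via a Hurwitz/rotation argument. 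The homeomorphic extension is built explicitly as $\widetilde{f}=\widetilde{h}+\overline{\widetilde{g}}$ from the quasiconformal extensions $\widetilde{h_a}$ of the $h_a$, with injectivity on $\overline{\SC}$ checked by a case analysis.
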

 
The proof of the univalence criterion in Theorem~\ref{thm-harmonic-in-rad} follows closely the reasoning in \cite{HM15-2}: We show that if a harmonic mapping $f=h+\overline{g}$ has small Schwarzian derivative then so does its analytic part $h$, and therefore $h$ is univalent by Theorem~\ref{thm-inner-radius-Ah-Ge}. The same can then be said about $h+ag$, the analytic part of the affine transformation $f+a\overline{f}, a\in\SD$. Finally, Hurwitz' theorem shows that $h+ag$ is univalent for every $a\in\overline{\SD}$ and by an elementary rotational argument we get that $f$ is injective. The crucial step in the generalization from $\SD$ to a quasidisk involves the hyperbolic derivative of admissible dilatations and is given in Lemma~\ref{lem-R-lambda-Om}. The homeomorphic extension under these hypotheses is novel even for the unit disk. 

Further, for more general domains we prove the following theorem.

\pagebreak 

\begin{theorem} \label{thm-finit-conn}
Let $\Om$ be a finitely connected domain. The following are equivalent. 
\begin{itemize}
\item[(i)] Every boundary component of $\Om$ is either a point or a quasicircle. 
\item[(ii)] $\si(\Om) >0$
\item[(iii)] $\si_H(\Om) >0$
\end{itemize}
\end{theorem}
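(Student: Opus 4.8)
The plan is to establish the cycle of implications $(i)\Rightarrow(iii)\Rightarrow(ii)\Rightarrow(i)$. The implication $(ii)\Rightarrow(i)$ is exactly the content of Theorem~\ref{thm-Ge-Os} (Osgood, Beardon--Gehring), so nothing new is needed there. Likewise $(iii)\Rightarrow(ii)$ is immediate from \eqref{harm-inn-less-inn}, which gives $\si_H(\Om)\le\si(\Om)$, so $\si_H(\Om)>0$ forces $\si(\Om)>0$. Thus the entire burden of the theorem is the implication $(i)\Rightarrow(iii)$: if every boundary component of $\Om$ is a point or a quasicircle, then $\si_H(\Om)>0$.

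For $(i)\Rightarrow(iii)$ I would mimic, at the level of multiply connected domains, the strategy already used for Theorem~\ref{thm-harmonic-in-rad}, leaning on the holomorphic analogue of Osgood's theorem. Concretely: suppose $f=h+\overline g$ is harmonic and orientation-preserving (the orientation-reversing case being handled by conjugation) in $\Om$ with $\|S_f\|_\Om\le t$ for $t$ to be chosen small. One first wants to deduce that the analytic part $h$ has small Schwarzian norm $\|Sh\|_\Om$. Reading off \eqref{Sch-HM}, we have
$$
Sh \;=\; S_f \;-\; \frac{\overline{\om}}{1-|\om|^2}\left(\frac{h''}{h'}\om'-\om''\right) \;+\; \frac32\left(\frac{\om'\overline{\om}}{1-|\om|^2}\right)^2,
$$
and the terms involving $\om=g'/h'$ must be controlled by $\la_\Om^2$ times a small constant. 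This is precisely where Lemma~\ref{lem-R-lambda-Om} on the hyperbolic derivative of admissible dilatations enters: it bounds quantities like $\la_\Om^{-1}|\om'|/(1-|\om|^2)$ (and the relevant second-order expression) in terms of a function of $\|S_f\|_\Om$, since the hypothesis $\|S_f\|_\Om\le t$ constrains how wild $\om$ can be. Granting such an estimate, one obtains $\|Sh\|_\Om\le C(t)$ with $C(t)\to 0$ as $t\to 0$. Choosing $t$ so that $C(t)<\si(\Om)$, the holomorphic theorem (Theorem~\ref{thm-Ge-Os} gives $\si(\Om)>0$, and Osgood's quantitative univalence criterion) yields that $h$ is univalent in $\Om$. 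Running the same argument for the affine transforms $f+a\overline f$, $a\in\SD$, whose analytic part is $h+ag$ and whose Schwarzian is again controlled (by the affine invariance / the chain rule \eqref{eq-chain} together with the same dilatation estimates applied to the dilatation $\frac{\om+a}{1+\overline a\om}$), we get that $h+ag$ is univalent in $\Om$ for every $a\in\SD$. A normal-families/Hurwitz argument extends this to $a\in\overline{\SD}$, and the elementary rotational argument of \cite{HM15-2} then forces $f=h+\overline g$ itself to be injective. Hence $\si_H(\Om)\ge t>0$.

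The main obstacle I anticipate is the transfer of Lemma~\ref{lem-R-lambda-Om} and the accompanying Schwarzian estimates from the quasidisk setting of Theorem~\ref{thm-harmonic-in-rad} to a finitely connected $\Om$ whose components are merely points or quasicircles. Several points need care: the hyperbolic metric $\la_\Om$ of a multiply connected domain is no longer explicit, so bounds must be stated intrinsically in terms of $\la_\Om$; the domain need not be a quasidisk, so one cannot quote Theorem~\ref{thm-inner-radius-Ah-Ge} directly and must instead invoke $\si(\Om)>0$ from Theorem~\ref{thm-Ge-Os}; and one must verify that the "admissible dilatation" estimates depend only on $\|S_f\|_\Om$ and not on conformal data of $\Om$. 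A clean way to localize is to pass to the universal covering $\pi:\SD\to\Om$: if $\tilde f=f\circ\pi$ then $\tilde h$, $\tilde g=g\circ\pi$ give the analytic/anti-analytic parts of $\tilde f$ on $\SD$, the dilatation lifts to $\tilde\om=\om\circ\pi$ with the same sup-norm, and by \eqref{eq-chain} the Schwarzian satisfies $\|S_{\tilde f}\|_\SD=\|S_f\|_\Om$; one then runs the $\SD$-estimates of Lemma~\ref{lem-R-lambda-Om}, which are already available, and descends. Univalence of $f$ in $\Om$, however, does not follow from univalence of $\tilde f$ in $\SD$, so at the final stage one still has to argue injectivity downstairs — which is exactly why the detour through the holomorphic criterion of Theorem~\ref{thm-Ge-Os} for $h$ (and $h+ag$) on $\Om$ itself, rather than on $\SD$, is essential, and why verifying that $\|Sh\|_\Om$ (not merely $\|S{\tilde h}\|_\SD$) is small is the crux of the argument.
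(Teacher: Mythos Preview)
Your reduction $(iii)\Rightarrow(ii)\Rightarrow(i)$ is fine, but the proposed argument for $(i)\Rightarrow(iii)$ has a genuine gap: on a multiply connected domain $\Om$ a locally univalent harmonic mapping $f$ need \emph{not} admit a global decomposition $f=h+\overline{g}$. One always has globally holomorphic $f_z$ and $\overline{f_{\bar z}}$, but their primitives $h,g$ are only defined up to periods, and single-valuedness of $f$ only forces $\oint_\ga f_z\,dz=-\overline{\oint_\ga \overline{f_{\bar z}}\,dz}$ for each loop $\ga$, not that either integral vanishes. (For instance, on an annulus about $|z|=1$ take $h'=1+c/z$, $g'=\bar c/z$ with $|c|$ small: then $f$ is single-valued, sense-preserving, with $\|S_f\|_\Om$ arbitrarily small, yet $h$ and $h+ag$ are multi-valued for all but one $a\in\overline{\SD}$.) Consequently there is no global holomorphic function $h$ (or $h+ag$) on $\Om$ to which Theorem~\ref{thm-Ge-Os} can be applied, and your ``rotational'' endgame, which rests on univalence of $h+ag$ in $\Om$ for every $a\in\overline{\SD}$, cannot get off the ground. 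A secondary obstacle is that the estimates you invoke (Lemma~\ref{lem-R-lambda-Om} and the proof of Theorem~\ref{thm-harmonic-in-rad}) use the Koebe lower bound $d(z)\la_\Om(z)\ge\tfrac14$ in~\eqref{Koebe-comparison}, which is false for multiply connected $\Om$; already for the punctured disk, which satisfies (i), one has $d(z)\la_\Om(z)\to0$ near the puncture. The universal-cover detour does not repair this, since $\|S_{f\circ\pi}\|_\SD$ differs from $\|S_f\|_\Om$ by $\|S\pi\|_\SD$, which is a fixed nonzero quantity and does not vanish with $t$.

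The paper avoids all of this by working \emph{locally in simply connected pieces}: Osgood's covering lemma (Lemma~\ref{lem-qc-decomp}) produces a finite quasiconformal decomposition $\mathfrak{D}$ of $\Om$ into quasidisks so that any two points of $\Om$ lie in some $\overline{D}$, $D\in\mathfrak{D}$. On each $D$ the monotonicity $\la_D\ge\la_\Om$ gives $\|S_f\|_D\le\|S_f\|_\Om$, and Theorem~\ref{thm-harmonic-in-rad} (including its homeomorphic extension to $\overline{D}$, since the two points may lie on $\partial D$) yields injectivity of $f$ on $\overline{D}$. Hence $f$ is injective on $\Om$. This mirrors Osgood's own proof of Theorem~\ref{thm-Ge-Os} and bypasses both the global decomposition issue and the failure of~\eqref{Koebe-comparison}.
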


The equivalence of (i) and (ii) was given in Theorem~\ref{thm-Ge-Os}, while the direction (iii)$\Rightarrow$(ii) follows trivially from \eqref{harm-inn-less-inn}. We prove the direction (i)$\Rightarrow$(iii) in Section~\ref{sect-fin-conn} by using Osgood's \cite{O80} quasiconformal decomposition of $\Om$ and the homeomorphic extension of Theorem~\ref{thm-harmonic-in-rad}. 

Finally, we give sufficient conditions for a harmonic mapping defined on a quasidisk to admit a quasiconformal extension to $\overline{\SC}$. 

\begin{theorem}\label{thm-qc-extension}
Let $\Om$ be a quasidisk and let $d\in[0,1)$. Then there exists a constant $c>0$, depending only on $d$ and on $\si(\Om)$, such that if $f$ is harmonic in $\Om$ with $\|S_f\|_\Om \leq c$ and its dilatation satisfies $\sup_{z\in\Om}|\om(z)| \leq d $ then $f$ admits a quasiconformal extension to $\overline{\SC}$. 
\end{theorem}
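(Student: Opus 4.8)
The plan is to combine the homeomorphic extension from Theorem~\ref{thm-harmonic-in-rad} with the classical Ahlfors--Weill machinery, applied not to $f$ itself but to its analytic part and its affine modifications. Since $\Om$ is a quasidisk, there is a quasiconformal self-map $\Phi$ of $\overline{\SC}$ with $\Phi(\SD)=\Om$; composing, it suffices (by the chain rule \eqref{eq-chain} and conformal naturality of the hyperbolic metric) to treat the case $\Om=\SD$, at the cost of replacing the constant $c$ by one depending also on the dilatation of $\Phi$, i.e.\ on $\si(\Om)$. So assume $f=h+\overline g$ is harmonic in $\SD$ with $\|S_f\|_\SD\le c$ and $\sup_\SD|\om|\le d<1$.

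The first step is to show that the analytic part $h$, and more generally $h_a:=h+ag$ for $a\in\overline\SD$, has small Schwarzian \emph{norm} on $\SD$; this is exactly the estimate underlying Theorem~\ref{thm-harmonic-in-rad}, obtained by bounding the extra terms in \eqref{Sch-HM} using $|\om|\le d$ and the Lemma on hyperbolic derivatives of admissible dilatations (Lemma~\ref{lem-R-lambda-Om}). Concretely one gets $\|Sh_a\|_\SD\le \psi(c,d)$ with $\psi(c,d)\to0$ as $c\to0$, uniformly in $a\in\overline\SD$. Choosing $c$ small enough that $\psi(c,d)<1$, the Ahlfors--Weill theorem furnishes, for each fixed $a$, an explicit quasiconformal extension of $h_a$ to $\overline\SC$ whose maximal dilatation is bounded by a function of $\psi(c,d)$ alone, hence uniformly in $a$. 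In particular $h=h_0$ and $g$ (as $h_1-h_0$, or directly) extend, and the affine map $f+a\overline f$ has analytic part $h_a$ for $|a|<1$.

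The second step is to manufacture a quasiconformal extension of $f$ from these. On $\SD$ itself, $f=h+\overline g$ with $h$ conformal-like and $g'=\om h'$, $|\om|\le d$: thus $f$ is a $\frac{1+d}{1-d}$-quasiconformal map of $\SD$ onto $f(\SD)$ (its complex dilatation is $\overline{\om}\,\overline{h'}/h'$ in modulus $\le d$). Outside $\overline\SD$, I would define the extension by the Ahlfors--Weill formula adapted to the harmonic setting: using the reflected data of $h$ and the analytic continuation of $\om$ across $\partial\SD$. The natural candidate is the harmonic-type reflection
\[
F(z)\;=\;h\!\left(\tfrac1{\bar z}\right)+\frac{(1-|z|^2)\,h'(1/\bar z)}{\bar z\bigl(\,\overline{z} - \tfrac1{z}\,\overline{\tfrac{h''(1/\bar z)}{2\,h'(1/\bar z)}}\,\bigr)}\;+\;\overline{\text{(analogous term built from }g\text{)}},
\]
but rather than guess the exact formula I would instead argue more structurally: the homeomorphic extension $\widetilde f$ of $f$ to $\overline\SC$ provided by Theorem~\ref{thm-harmonic-in-rad} is already a homeomorphism; it remains to check it is quasiconformal, equivalently that its dilatation is bounded away from $1$ a.e. On $\SD$ this holds because $|\om|\le d$; on $\overline\SC\setminus\overline\SD$ one identifies $\widetilde f$ with the Ahlfors--Weill extension of the conformal map $h_a$ for a suitable choice (or a combination) and reads off the bound $\psi(c,d)<1$; on $\partial\SD$ the set has measure zero. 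Finally, a removability argument (the boundary quasicircle, here the unit circle, is a removable set for quasiconformal maps) glues the two pieces into a globally quasiconformal map of $\overline\SC$.

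The main obstacle is the construction and control of the extension outside the disk: unlike the holomorphic case there is no single Schwarzian whose smallness immediately yields Ahlfors--Weill, so one must pass through the family $\{h_a\}_{a\in\overline\SD}$, extend each, and verify that these extensions are mutually compatible (their restrictions to $\partial\SD$ agree with the boundary values of $f$, and the affine relation $f+a\overline f\leftrightarrow h_a$ is respected off the disk as well). Equivalently, one must show the homeomorphic extension of Theorem~\ref{thm-harmonic-in-rad} coincides with an Ahlfors--Weill-type map on the complement and is not merely continuous there. Handling the dilatation bound uniformly in $a$, and checking that the pull-back under $\Phi$ does not destroy the harmonicity structure needed for this argument (only its quasiconformality, which is what we ultimately want, is preserved), are the delicate points; once the complement map is in hand, the gluing across the quasicircle is standard.
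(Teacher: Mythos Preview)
Your proposal has two genuine gaps.

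First, the reduction to $\SD$ is not as innocuous as you suggest. Composing with a quasiconformal $\Phi$ destroys harmonicity, so you must use the Riemann map $\vp:\SD\to\Om$; but then $F=f\circ\vp$ satisfies only $\|S_F-S\vp\|_\SD=\|S_f\|_\Om\le c$, not $\|S_F\|_\SD\le c$, and since $\|S\vp\|_\SD$ can be as large as $6$ you have not reduced to the small-Schwarzian-on-$\SD$ situation of \cite{HM15-2}. This can be patched (work with ``Schwarzian close to $S\vp$'' throughout), but it buys nothing over arguing directly on $\Om$, which is what the paper does.

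Second, and more seriously, you have no mechanism to produce a quasiconformal extension of $f$ itself from the Ahlfors--Weill extensions of the $h_a$. The homeomorphic extension $\widetilde f=\widetilde h+\overline{\widetilde g}$ built in Theorem~\ref{thm-harmonic-in-rad} is, on the complement of $\Om$, a sum of a quasiconformal map and the conjugate of a difference of two quasiconformal maps; there is no reason its complex dilatation should be bounded away from $1$, and it certainly does not coincide with any $\widetilde{h_a}$ (those involve $a\,g$, not $\overline g$). Your ``suitable choice or combination'' of $a$ does not exist: no linear combination of the $h_a$ recovers the antiholomorphic part. You flag this as ``the main obstacle'' but do not resolve it.

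The paper's route avoids analysing the exterior of $\Om$ altogether. It shows that $h_a=h+ag$ is univalent not only for $|a|\le 1$ but for $|a|\le\de$ with some $\de\in(1,1/d)$; this overshoot is the key point you are missing, and it uses crucially the hypothesis $|\om|\le d<1$. From this one deduces that the ``generalized dilatation'' $\psi_\al(z)=\dfrac{g(z)-g(\al)}{h(z)-h(\al)}$ is bounded in modulus by $1/\de<1$, which in turn bounds the cross-ratio of any four points on $\Gamma_r=f(\partial\Om_r)$ uniformly in $r$. Hence $\Gamma_r$ is a $K$-quasicircle with $K$ independent of $r$, one builds a quasiconformal extension of $f|_{\Om_r}$ using reflections across $\partial\Om_r$ and $\Gamma_r$, and lets $r\to1$ via normal-family compactness. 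No explicit exterior formula for $f$ is ever needed.
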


If $d=0$ then $f$ is analytic and we recover Ahlfors' \cite{Ah} theorem. For our proof of Theorem~\ref{thm-qc-extension} we consider the dilation $\Om_r$ (\emph{i.e.}~the image of $|z|< r$, for $r<1$, under the Riemman mapping of $\Om$) and use quasiconformal reflections to obtain a $K$-quasiconformal extension of $f\big|_{\Om_r}$ to $\overline{\SC}$. The desired extension is then harvested as the limit for $r\to1$, once we prove that $K$ is independent of $r$ by studying the cross-ratio of points on the image of $\partial\Om_r$ under $f$. For the latter we use the insightful ideas of \cite{HM13}. We prove Theorem~\ref{thm-qc-extension} in Section~\ref{sect-qc-ext}. 

We note that for the case when $\Om$ is the unit disk $\SD$, Theorem~\ref{thm-qc-extension} was proved in \cite{HM15-2}. However, the slightly stronger statement made there, namely that the constants $c$ and $d$ in the hypotheses are independent, does not seem to follow from the suggestion of the authors to argue as in \cite{HM13} for a proof. It is interesting to ask if this stronger statement can be rigorously proved.

\section{Preliminaries} 
 
\subsection{Bounded Schwarzian derivative} A well-known theorem of \mbox{Pommerenke} \cite{Po64} states that if $\vp$ is analytic and locally univalent in $\SD$ then 
\be  \label{Pom-lem}
(1-|z|^2)\left| \frac{\vp''(z)}{\vp'(z)}  -\frac{2 \, \bar{z}}{1-|z|^2} \right| \, \leq \, 2 \sqrt{1+\frac{\|S\vp\|_{\SD}}{2}}, \qquad z\in\SD. 
\ee
From this, a simple application of Montel's theorem shows that the set of functions 
$$
\{\, \vp  \; : \; \|S\vp\|_{\SD} \leq c \,\},
$$
where $c>0$, constitutes a normal family (the set of fucntions $(\log\vp')'$ is locally uniformly bounded in $\SD$, and so is the set of functions $\vp$). 

According to Theorem 6 in \cite{HM15}, if $f=h+\overline{g}$ is harmonic and locally univalent in $\SD$ then 
\be \label{Sf-bdd-iff-Sh}
\|S_f\|_\SD<\infty \qquad \text{if and only if} \qquad \|Sh\|_\SD<\infty. 
\ee

\vskip.2cm
\subsection{Nomralizations for harmonic mappings} Let $\Om$ be a simply connected domain that contains the origin and let $t\geq 0$. We then denote by $\cF_t(\Om)$ the set of all sense-preserving harmonic mappings $f=h+\overline{g}$ in $\Om$ which satisfy $\|S_f\|_\Om\leq t$ and are normalized by $h(0)=g(0)=0$ and $h'(0)=1$. Let 
$$
\cF_t^0(\Om)   \, = \, \{ f \in  \cF_t(\Om) \; : \; g'(0)=0 \}. 
$$
The following proposition is well known among experts but we will include a proof here for the convenience of the reader. We note that the compactness of $\cF_t^0(\SD)$ was shown in \cite{CHM}. 

\begin{proposition}\label{normality}
The family $\cF_t(\Om)$ is normal. The family $\cF_t^0(\Om)$ is normal and compact. 
\end{proposition}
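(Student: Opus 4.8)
The plan is to reduce everything to the analytic data $(h,\om)$ of $f=h+\overline g$ and to the classical normality results recalled in the Preliminaries. First I would show that $\|S_f\|_\Om\leq t$ forces a bound on $\|Sh\|_\Om$: transferring the problem to $\SD$ via the Riemann map $\pi$ and the chain rule \eqref{eq-chain}, it suffices to treat $\Om=\SD$. On $\SD$, formula \eqref{Sch-HM} expresses $Sh$ as $S_f$ minus an expression in $h''/h'$, $\om$, $\om'$, $\om''$; one controls the dilatation terms by combining the Schwarz--Pick estimate for $\om$ (which gives $|\om'|\lesssim(1-|\om|^2)/(1-|z|^2)$, hence $|\om'|/(1-|\om|^2)\lesssim 1/(1-|z|^2)$) with Pommerenke's bound \eqref{Pom-lem} for the locally univalent $h$, after an a priori estimate on $\|Sh\|_\SD$ coming from \eqref{Sf-bdd-iff-Sh} and a compactness/continuity argument — or, more cleanly, one notes that the map $\|S_f\|_\SD\mapsto\|Sh\|_\SD$ is bounded on bounded sets, which is exactly the quantitative content behind \eqref{Sf-bdd-iff-Sh}. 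Either way we obtain $\|Sh\|_\Om\leq C(t)$.

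With $\|Sh\|_\Om\leq C(t)$ in hand, Montel's theorem applied to the normal family $\{\vp:\|S\vp\|_\SD\leq C(t)\}$ (via \eqref{Pom-lem}, transported to $\Om$ by $\pi$) shows that the analytic parts $h$ form a normal family; since each $h$ is normalized by $h(0)=0$, $h'(0)=1$ at the base point, and the bound on $(\log h')'$ localizes, any locally uniform limit $h_0$ is again locally univalent with $h_0'(0)=1$, so limits do not degenerate. Simultaneously the dilatations $\om$ lie in the closed unit ball of $H^\infty(\Om)$, hence form a normal family by Montel, and any limit $\om_0$ satisfies $|\om_0|\leq 1$; by the maximum principle either $|\om_0|<1$ throughout or $\om_0$ is a unimodular constant. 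Recovering $g$ from $g'=\om h'$ and $g(0)=0$, we get a locally uniformly convergent subsequence $f_n=h_n+\overline{g_n}\to f_0=h_0+\overline{g_0}$, which establishes normality of $\cF_t(\Om)$.

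For $\cF_t^0(\Om)$ I would additionally use the normalization $g'(0)=\om(0)h'(0)=\om(0)=0$. Then any limit dilatation $\om_0$ satisfies $\om_0(0)=0$, so $\om_0$ is not a unimodular constant and the Schwarz lemma gives $|\om_0(z)|\leq|z/\text{(conformal radius)}|<1$ on compacta — more precisely, $|\om_0|<1$ on all of $\Om$ — so the limit $f_0$ is again sense-preserving and locally univalent, lies in $\cF_t^0(\Om)$ (the Schwarzian norm is lower semicontinuous under locally uniform convergence of locally univalent maps, so $\|S_{f_0}\|_\Om\leq t$), and the family is closed under locally uniform limits. This is precisely compactness. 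The main obstacle is the first step: obtaining the quantitative implication $\|S_f\|_\Om\leq t\Rightarrow\|Sh\|_\Om\leq C(t)$ with $C(t)$ finite, since \eqref{Sch-HM} alone does not bound $Sh$ pointwise — one genuinely needs to feed Pommerenke's estimate back into \eqref{Sch-HM} and absorb the $h''/h'$ term, which requires either a clever rearrangement or a normal-families argument to rule out blow-up; the rest is routine bookkeeping with Montel and the Schwarz lemma.
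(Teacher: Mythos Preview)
Your proposal is correct and follows essentially the same route as the paper: transfer to $\SD$ via the Riemann map (using Kraus' bound $\|S\vp\|_\SD\leq 6$), invoke the quantitative form of \eqref{Sf-bdd-iff-Sh} (which the paper obtains by a ``close inspection'' of \cite[Thm.~6]{HM15}) to bound $\|Sh\|_\SD$ uniformly, apply Pommerenke's inequality \eqref{Pom-lem} and Montel for normality, and then use pointwise convergence $S_{f_n}\to S_f$ for compactness. The only minor organizational difference is that the paper first proves $\cF_t^0(\Om)$ normal and then deduces normality of $\cF_t(\Om)$ via the affine decomposition $f=f_0+\overline{b_1}\,\overline{f_0}$, $|f|\leq 2|f_0|$, whereas you argue for $\cF_t(\Om)$ directly; your explicit Schwarz-lemma step ruling out a unimodular-constant limit $|\om_0|\equiv 1$ is a detail the paper leaves implicit by citing \cite{CHM}.
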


\begin{proof}To show that $\cF_t^0(\SD)$ is normal we observe that the set
$$
\{\, h   \; : \; f=h+\overline{g}\in\cF_t^0(\SD) \, \}
$$
is a normal family in view of inequality~\eqref{Pom-lem} and since $\|Sh\|_\SD$ is bounded by \eqref{Sf-bdd-iff-Sh} (even more so, a close inspection of  \cite[Thm.6]{HM15} shows that it is uniformly bounded). Also, again by \eqref{Pom-lem}, the functions $h'$ are locally uniformly bounded in $\SD$ and, in view of the condition $|g'|<|h'|$, so are the functions $g'$. Hence the family $\{ g :  f\in\cF_t^0(\SD)  \}$ is also normal. Thus $\cF_t^0(\SD)$ is a normal family. 

For a simply connected domain $\Om$, with $0\in\Om$, let $f\in\cF_t^0(\Om)$ and consider the mapping $F=\vp'(0)^{-1}f\circ \vp$, where $\vp$ is a Riemann map for which $\Om=\vp(\SD)$ and $\vp(0)=0$. By the chain rule \eqref{eq-chain} we have that 
$$
S_F(z) \, = \, S_f\big( \vp(z) \big) \vp'(z)^2 + S\vp(z), \qquad z\in \SD. 
$$
We then compute  
\be \label{Riemann-Schwarzian}
\frac{|S_F(z)-S\vp(z)|}{ \la_\SD(z)^2} \, = \, \frac{| S_f\big( \vp(z) \big)|}{ \la_\Om\big( \vp(z) \big)^2}, \qquad z\in \SD, 
\ee
which shows that $\|S_F-S\vp\|_\SD = \|S_f\|_\Om$. According to Kraus' theorem \cite[Ch.II, Thm.1.3]{Leh} we have that $\|S\vp\|_\SD \leq 6$. Therefore, we get that
$$
\|S_F\|_\SD \, \leq \, \|S_f\|_\Om  + \|S\vp\|_\SD \, \leq \, t+6.
$$
Hence the set $\{\vp'(0)^{-1} f\circ\vp : f\in\cF_t^0(\Om) \}$ is included in the normal family $\cF_{t+6}^0(\SD)$ and is, therefore, a normal family itself. The claim that $\cF_t^0(\Om)$ is normal follows directly from this. 

Any $f\in\cF_t(\Om)$ can be written as an affine transform of a mapping in $\cF_t^0(\Om)$, in particular, if $b_1=g'(0)$ we may write $f=f_0+\overline{b_1}\,\overline{f_0}$ for some $f_0\in\cF_t^0(\Om)$. Hence $|f|\leq 2 |f_0|$ and with the observation that Montel's criterion for normality remains valid for families of harmonic mappings (see \cite[p.80]{Du3}) we conclude that $\cF_t(\Om)$ is normal. 

Finally, the compactness of $\cF_t^0(\Om)$, as shown in the course of the proof of Theorem 3 in \cite{CHM}, amounts to the observation that if $f_n$ is a sequence in $\cF_t^0(\Om)$ that converges to $f$ locally uniformly in $\Om$ then $S_{f_n} \to S_f$ pointwise in $\Om$. Hence $f\in\cF_t^0(\Om)$ and so this class is compact. 

\end{proof}

\subsection{Affine invariance}  Let $a\in\SD$ and consider the affine transformation of a mapping $f$ in $\cF_t(\Om)$ given by  
\be \label{aff-trans}
F(z) \, =\, A_a f(z) \, =\, \frac{f(z) + a\, \overline{f(z)} }{ 1+a g'(0)}, \qquad z\in\Om. 
\ee
Now $F\in\cF_t(\Om)$ since it satisfies $S_F\equiv S_f$ (see \cite[Prop.1]{HM15}) and the corresponding normalizations. It can easily be seen that the dilatation of $F$ is given by \linebreak $\om_F=\nu\vp_{\overline{a}}\circ\om$, where $\nu\in\ST(=\partial\SD)$ and
$$
\vp_a(z)=\frac{a+z}{1+\overline{a}z}, \qquad z\in\SD.
$$
If we make the choice $a=-\overline{\om(0)}$ then $F$ will have the additional normalization $\om_F(0)=\nu\vp_{\overline{a}}(-\overline{a}) =0$ and will therefore belong to $\cF_t^0(\Om)$.  

\subsection{Quasiconformal mappings} A sense-preserving homeomorphism $f:\Om\to\SC$ is said to be $K$-quasiconformal, $K\geq1$, if it is absolutely continuous on lines and satisfies $|f_{\overline{z}}| \leq k |f_z|$, where $k=(K-1)/(K+1)$, almost everywhere in $\Om$. \linebreak A mapping is called quasiconformal if it is $K$-quasiconformal for some $K\geq1$. The 1-quasiconformal mappings are the conformal mappings. Note that a harmonic mapping is quasiconformal if its dilatation satisfies $|\om|\leq k<1$.

A domain $\Om$ is called a $K$-quasidisk, and its boundary a $K$-quasicircle, if it is the image of $\SD$ under a $K$-quasiconformal self-map of $\overline{\SC}$. According to a theorem of Ahlfors \cite{Ah} a Jordan curve $\ga\subset\overline{\SC}$ is a quasicircle if and only if for all points $z_j\in\ga, j=1,2,3,4$, such that $z_1$ and $z_3$ separate $z_2$ and $z_4$, the cross-ratio 
$$
(z_1,z_2,z_3,z_4) = \frac{(z_1-z_2)(z_3-z_4)}{(z_1-z_3)(z_2-z_4)}
$$
satisfies 
$$
|(z_1,z_2,z_3,z_4)| \leq C, 
$$
for some constant $C>0$. 

Let $\Om_1$ and $\Om_2$ be the complementary components of a Jordan curve $\ga\subset\overline{\SC}$. Then a sense-reversing homeomorphism $\la$ of the sphere onto itself is a reflection across $\ga$ if it maps $\Om_1$ onto $\Om_2$ and keeps every point on $\ga$ fixed. According to a theorem of K\"uhnau \cite{Ku88} (see also Theorem 2.1.4 in \cite{GeHa}) $\ga$ is a $K$-quasicircle if and only if it admits a reflection $\la$ such that $\la(\overline{z})$ is $K$-quasiconformal.

\section{Preparatory lemmas}
 
\subsection{Adaptations to general domains} The following proposition is a straightforward generalization of \eqref{Sf-bdd-iff-Sh}. 

\begin{proposition} \label{general-Sf-iff-Sh}
Let $\Om$ be a simply connected domain and $f=h+\overline{g}$ a sense-preserving locally univalent harmonic mapping in $\Om$. Then $\|S_f\|_\Om<\infty$ if and only if $\|S_h\|_\Om<\infty$. 
\end{proposition}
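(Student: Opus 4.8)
The plan is to transfer the problem to the unit disk by a Riemann map and then quote the already-established equivalence \eqref{Sf-bdd-iff-Sh} together with Kraus' bound on the Schwarzian of a Riemann map.

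First I would fix a Riemann map $\vp:\SD\to\Om$ and put $F=f\circ\vp$. Because $\vp$ is analytic, $F=H+\overline{G}$ with $H=h\circ\vp$ and $G=g\circ\vp$, and $F$ is a sense-preserving, locally univalent harmonic mapping in $\SD$ whose dilatation is $\om_F=\om\circ\vp$, so that $|\om_F|<1$; in particular \eqref{Sch-HM} and \eqref{Sf-bdd-iff-Sh} are applicable to $F$. Next I would record the two chain-rule identities obtained from \eqref{eq-chain}, applied to $f$ and then to $h$ (viewed as a harmonic mapping with vanishing co-analytic part):
$$ S_F \,=\, (S_f\circ\vp)\,(\vp')^2 + S\vp, \qquad S_H \,=\, (S_h\circ\vp)\,(\vp')^2 + S\vp. $$
Dividing by $\la_\SD^2$ and using the covering relation $\la_\Om(\vp(z))\,|\vp'(z)|=\la_\SD(z)$ — exactly the computation that produced \eqref{Riemann-Schwarzian} — these yield
$$ \|S_F-S\vp\|_\SD \,=\, \|S_f\|_\Om \qquad\text{and}\qquad \|S_H-S\vp\|_\SD \,=\, \|S_h\|_\Om. $$

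Then, using the triangle inequality for the Schwarzian norm together with Kraus' theorem $\|S\vp\|_\SD\leq 6$, I would deduce $\|S_F\|_\SD\leq\|S_f\|_\Om+6$ and $\|S_f\|_\Om\leq\|S_F\|_\SD+6$, and likewise for the pair $\|S_H\|_\SD,\|S_h\|_\Om$; consequently $\|S_f\|_\Om<\infty \Leftrightarrow \|S_F\|_\SD<\infty$ and $\|S_h\|_\Om<\infty \Leftrightarrow \|S_H\|_\SD<\infty$. Finally I would invoke \eqref{Sf-bdd-iff-Sh} for the harmonic mapping $F=H+\overline{G}$ in $\SD$, which gives $\|S_F\|_\SD<\infty \Leftrightarrow \|S_H\|_\SD<\infty$, and then chain the three equivalences to conclude. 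I do not expect a genuine obstacle here: the only points that need a line of justification are that $F$ is an admissible (sense-preserving, locally univalent) harmonic mapping on $\SD$ and that $\vp$ pulls the hyperbolic metric of $\Om$ back to that of $\SD$, so that the Schwarzian norms transform cleanly; both are standard, and everything else is bookkeeping with the chain rule.
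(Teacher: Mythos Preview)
Your proposal is correct and follows essentially the same approach as the paper's proof: transfer to $\SD$ via a Riemann map, use the chain rule to obtain $\|S_F-S\vp\|_\SD=\|S_f\|_\Om$ and $\|S_H-S\vp\|_\SD=\|S_h\|_\Om$, then combine Kraus' bound $\|S\vp\|_\SD\leq 6$ with \eqref{Sf-bdd-iff-Sh}. You spell out the triangle-inequality step and the admissibility of $F$ a bit more explicitly, but the argument is the same.
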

\begin{proof}
Let $\vp$ be a Riemann map for which $\Om=\vp(\SD)$ and consider the mappings $F=f\circ \vp$ and $H=h\circ \vp$. By a calculation we saw in \eqref{Riemann-Schwarzian} we readily have that
$$
\|S_F-S\vp\|_\SD \, = \, \|S_f\|_\Om \qquad \text{and} \qquad \|SH-S\vp\|_\SD \, = \, \|Sh\|_\Om. 
$$
The proposition is a direct consequence of Theorem 6 in \cite{HM15} and Kraus' theorem $\|S\vp\|_\SD\leq6$.
\end{proof}

\vskip.2cm
We now generalize inequality \eqref{Pom-lem} to an arbitrary hyperbolic domain $\Om$. We write $d(z)=dist(z,\partial\Om)$ for the distance of a point $z$ in $\Om$ to the boundary $\partial\Om$.

\begin{proposition} \label{general-Pommerenke}
If $h$ is analytic and locally univalent in $\Om$ then  
$$
\left| \frac{h''(z)}{h'(z)} \right| \, \leq \, \frac{2}{d(z)} \sqrt{1+\frac{\|Sh\|_\Om}{2}}, \qquad z\in\Om. 
$$
\end{proposition}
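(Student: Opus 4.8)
The plan is to reduce the statement on the general domain $\Om$ to the unit-disk inequality \eqref{Pom-lem} via a Riemann map, exactly as in Propositions~\ref{normality} and~\ref{general-Sf-iff-Sh}, but now tracking the geometric factor $d(z)=\mathrm{dist}(z,\partial\Om)$ rather than the hyperbolic density. First I would fix $z_0\in\Om$ and choose a conformal map $\vp:\SD\to\Om$ with $\vp(0)=z_0$; equivalently, precompose $h$ with such a $\vp$ and work at the origin. Writing $H=h\circ\vp$, the chain rule for the logarithmic derivative gives
$$
\frac{H''(0)}{H'(0)} \,=\, \frac{h''(z_0)}{h'(z_0)}\,\vp'(0) + \frac{\vp''(0)}{\vp'(0)},
$$
so evaluating \eqref{Pom-lem} at $z=0$ (where the subtracted term $2\bar z/(1-|z|^2)$ vanishes) yields
$$
\left| \frac{h''(z_0)}{h'(z_0)}\,\vp'(0) + \frac{\vp''(0)}{\vp'(0)} \right| \,\leq\, 2\sqrt{1+\tfrac12\|SH\|_\SD}.
$$

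The next step is to control the two auxiliary quantities $|\vp'(0)|$ and $|\vp''(0)/\vp'(0)|$ in terms of $d(z_0)$ and a universal constant. The Koebe one-quarter theorem applied to $\vp$ (after the normalization $\vp(0)=z_0$) gives $d(z_0)\ge \tfrac14|\vp'(0)|$, hence $|\vp'(0)|\le 4\,d(z_0)$; and the standard distortion estimate $|\vp''(0)/\vp'(0)|\le 4$ (the coefficient bound $|a_2|\le 2$ for the normalized univalent function $z\mapsto(\vp(z)-z_0)/\vp'(0)$, doubled because $\vp''(0)/\vp'(0)=2a_2$). Actually the cleanest route is to apply \eqref{Pom-lem} to $\vp$ itself at $z=0$, which gives $|\vp''(0)/\vp'(0)|\le 2\sqrt{1+\tfrac12\|S\vp\|_\SD}\le 2\sqrt{1+3}=4$ using Kraus' theorem $\|S\vp\|_\SD\le 6$. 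Combining the triangle inequality with these bounds,
$$
|\vp'(0)|\,\left| \frac{h''(z_0)}{h'(z_0)} \right| \,\leq\, 2\sqrt{1+\tfrac12\|SH\|_\SD} + 4.
$$

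To finish I need $\|SH\|_\SD$ bounded by $\|Sh\|_\Om$; from the computation \eqref{Riemann-Schwarzian} (with $g=0$, i.e.\ in the purely analytic case) we have $\|SH-S\vp\|_\SD=\|Sh\|_\Om$, so $\|SH\|_\SD\le\|Sh\|_\Om+6$. Substituting and using $|\vp'(0)|\le 4\,d(z_0)$ from Koebe gives
$$
\left| \frac{h''(z_0)}{h'(z_0)} \right| \,\leq\, \frac{1}{|\vp'(0)|}\left( 2\sqrt{1+\tfrac12\|Sh\|_\Om+3}+4\right) \,\geq\, \frac{1}{4\,d(z_0)}\left( 2\sqrt{4+\tfrac12\|Sh\|_\Om}+4\right),
$$
which, after elementary manipulation ($\sqrt{4+x}\le 2+\tfrac14 x$ type crudeness, or simply extracting the $\sqrt{1+\|Sh\|_\Om/2}$ factor with a change of universal constant), is dominated by $\tfrac{2}{d(z_0)}\sqrt{1+\|Sh\|_\Om/2}$ up to adjusting constants. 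The only genuine subtlety is bookkeeping the universal constants so that the final bound has precisely the clean form $\tfrac{2}{d(z)}\sqrt{1+\|Sh\|_\Om/2}$ claimed; this is where I would be most careful, possibly by using a sharper version of the Koebe/distortion estimates (applying \eqref{Pom-lem} to $h\circ\vp$ with a better-chosen $\vp$, or invoking the known sharp form on the disk directly) rather than the lossy combination above. Since the inequality is scale- and Möbius-robust, I expect the constant $2$ to come out correctly from the disk case \eqref{Pom-lem} together with the $\tfrac14$ in Koebe; if not, a routine optimization over the auxiliary normalization closes the gap.
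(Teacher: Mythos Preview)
Your approach via the full Riemann map $\vp:\SD\to\Om$ has two concrete problems, and it differs in a crucial way from the paper's argument.

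First, a slip: to pass from $\frac{1}{|\vp'(0)|}$ to $\frac{1}{d(z_0)}$ you need a \emph{lower} bound on $|\vp'(0)|$, namely $|\vp'(0)|\ge d(z_0)$ (Schwarz lemma, since $D(z_0,d(z_0))\subset\Om$). Koebe's $\tfrac14$-theorem gives the opposite inequality $|\vp'(0)|\le 4\,d(z_0)$, which is useless here; indeed your displayed chain has a stray ``$\geq$'' precisely at this step.

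Second, and more seriously, your route cannot recover the constant $2$ stated in the proposition. Even after fixing the direction above, the Riemann map contributes the term $\vp''(0)/\vp'(0)$ (of size up to $4$) and Kraus' theorem adds $+6$ to the Schwarzian norm, so the best you obtain is
\[
\left|\frac{h''(z_0)}{h'(z_0)}\right|\le \frac{1}{d(z_0)}\Big(2\sqrt{4+\tfrac12\|Sh\|_\Om}+4\Big),
\]
which at $\|Sh\|_\Om=0$ already equals $8/d(z_0)$, four times the claimed bound. No ``routine optimization over the auxiliary normalization'' closes this gap: the losses are intrinsic to mapping onto all of $\Om$. (Your approach also presupposes $\Om$ simply connected, whereas the proposition is stated for an arbitrary hyperbolic domain.)

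The paper's trick is to avoid the Riemann map entirely: map $\SD$ \emph{linearly} onto the inscribed disk $\De=D(\al,d(\al))\subset\Om$ via $\ze\mapsto \al+d(\al)\ze$. Because this map is affine, both its pre-Schwarzian and its Schwarzian vanish, so $H''(0)/H'(0)=d(\al)\,h''(\al)/h'(\al)$ exactly and $SH(\ze)=d(\al)^2\,Sh(z)$ with no extra term. The comparison principle $\la_\De\ge\la_\Om$ on $\De$ then gives $\|SH\|_\SD\le\|Sh\|_\Om$ directly, with no $+6$. Applying \eqref{Pom-lem} at $\ze=0$ yields the bound with the sharp constant $2$ in one line. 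This is the idea you were groping for in your last sentence (``a better-chosen $\vp$''): the right $\vp$ is the trivial linear one onto the largest inscribed disk, not the Riemann map onto $\Om$.
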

\begin{proof}
Let $\al\in\Om, d=d(\al)$ and consider the disk $\De =\{z : |z-\al|<d\}$. We write $z=\al+d\,\ze\in\De$, for $\ze\in\SD$, and note that $d \, \la_\De(z)=\la_\SD(\ze)$. Since $\De \subset \Om $ we have by the comparison principle \cite[Thm.8.1]{BM} that $\la_\De(z) \geq \la_\Om(z)$, for all $z\in\De$. Let $H(\ze)=h(z)$ and observe that $SH(\ze)=d^2 Sh(z)$. We have that 
$$
\|SH\|_\SD \, = \, \sup_{\ze\in\SD} \frac{|SH(\ze)|}{\la_\SD(\ze)^2}\, = \, \sup_{z\in\De} \frac{|Sh(z)|}{\la_\De(z)^2} \,  \leq \, \sup_{z\in\Om} \frac{|Sh(z)|}{\la_\Om(z)^2} \, = \, \|Sh\|_\Om. 
$$
We now apply inequality~\eqref{Pom-lem} to the function $H$ and evaluate at $\ze=0$ to obtain
$$
d(\al) \left| \frac{h''(\al)}{h'(\al)} \right| \, = \, \left| \frac{H''(0)}{H'(0)} \right| \, \leq \, 2 \sqrt{1+\frac{\|SH\|_\SD}{2}} \, \leq \, 2 \sqrt{1+\frac{\|Sh\|_\Om}{2}}. 
$$
The proof is complete. 
\end{proof}

\subsection{The hyperbolic derivative} If $\om:\Om\to\SD$ is an analytic function then its hyperbolic derivative is given by 
$$
\om^*(z) \, =\, \frac{\om'(z)}{\la_\Om(z) \, (1-|\om(z)|^2)}, \qquad z\in\Om, 
$$
and the quantity $\|\om^*\| = \sup_{z\in\Om} |\om^*(z)|$ is called the hyperbolic norm of $\om$. In view of the generalized Schwarz-Pick lemma \cite[Thm.10.5]{BM} we always have that $\|\om^*\|\leq1$. The hyperbolic derivative satisfies the chain rule $(\om\circ\vp)^*=\om^*\circ\vp \cdot \vp^*$ for any two functions for which the composition is well defined. 

It has been shown in \cite{GZ} that for any analytic $\om:\SD\to\SD$ it holds that 
\be \label{ineq-hyp-norm}
\frac{(1-|z|^2)^2 |\om''(z)|}{1-|\om(z)|^2} \, \leq \, C \|\om^*\|, \qquad z\in\SD, 
\ee
for some constant $C>0$. We will now generalize this to an arbitrary hyperbolic domain $\Om$. Again, here $d(z)=dist(z,\partial\Om)$.

\begin{proposition} \label{prop-general-omega}
If $\om:\Om\to\SD$ is analytic then  
$$
\frac{d(z)^2 \, |\om''(z)|}{1-|\om(z)|^2}  \, \leq \,  C \|\om^*\|, \qquad z\in\Om,  
$$
for some constant $C>0$. 
\end{proposition}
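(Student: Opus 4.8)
The plan is to mimic exactly the localization argument used in the proof of Proposition~\ref{general-Pommerenke}, transferring the known disk estimate \eqref{ineq-hyp-norm} to an arbitrary hyperbolic domain by rescaling a subdisk. Fix $\al\in\Om$ and set $d=d(\al)$, so that the Euclidean disk $\De=\{z:|z-\al|<d\}$ is contained in $\Om$. Parametrize $\De$ by $z=\al+d\,\ze$ with $\ze\in\SD$, and put $W(\ze)=\om(\al+d\,\ze)$. Since $\om$ maps into $\SD$, so does $W$, and $W$ is analytic in $\SD$; thus \eqref{ineq-hyp-norm} applies to $W$.

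The next step is to control the two sides of \eqref{ineq-hyp-norm} for $W$ at the origin. On the left, since the change of variables is affine we have $W''(0)=d^2\om''(\al)$ and $W(0)=\om(\al)$, so evaluating \eqref{ineq-hyp-norm} at $\ze=0$ produces exactly $d(\al)^2|\om''(\al)|/(1-|\om(\al)|^2)$. For the right-hand side I need $\|W^*\|\leq\|\om^*\|$, where the hyperbolic norm of $W$ is computed with respect to $\SD$. Using the scaling relation $\la_\De(\al+d\,\ze)=\la_\SD(\ze)/d$ (valid because $\De$ is a disk of radius $d$, just as $d\,\la_\De(z)=\la_\SD(\ze)$ was used in Proposition~\ref{general-Pommerenke}), a direct computation gives
$$
W^*(\ze) \, = \, \frac{W'(\ze)}{\la_\SD(\ze)\,\bigl(1-|W(\ze)|^2\bigr)} \, = \, \frac{\om'(z)}{\la_\De(z)\,\bigl(1-|\om(z)|^2\bigr)}, \qquad z=\al+d\,\ze,
$$
that is, $W^*(\ze)$ equals the hyperbolic derivative of the restriction $\om|_\De:\De\to\SD$ evaluated at $z=\al+d\,\ze$. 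Hence $\|W^*\|$ equals the hyperbolic norm of $\om|_\De$ over $\De$. (Alternatively one may invoke the chain rule $(\om\circ\psi)^*=\om^*\circ\psi\cdot\psi^*$ with $\psi(\ze)=\al+d\,\ze$ a conformal bijection of $\SD$ onto $\De$, for which $|\psi^*|\equiv1$.)

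Finally, because $\De\subset\Om$, the comparison principle for the hyperbolic metric \cite[Thm.8.1]{BM} gives $\la_\De(z)\geq\la_\Om(z)$ for $z\in\De$, and therefore the hyperbolic derivative of $\om|_\De$ is pointwise dominated in modulus by $\om^*$ on $\De$; taking suprema, the hyperbolic norm of $\om|_\De$ over $\De$ is at most $\|\om^*\|$. Chaining these bounds through \eqref{ineq-hyp-norm} for $W$ yields
$$
\frac{d(\al)^2\,|\om''(\al)|}{1-|\om(\al)|^2}\,=\,\frac{|W''(0)|}{1-|W(0)|^2}\,\leq\,C\,\|W^*\|\,\leq\,C\,\|\om^*\|,
$$
and since $\al\in\Om$ was arbitrary this is the desired inequality, with the same constant $C$ as in \eqref{ineq-hyp-norm}. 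There is no genuine obstacle here; the only point requiring a little care is the identification of $\|W^*\|$ with the hyperbolic norm of the restricted map, which reduces to the scaling of the hyperbolic density on a disk together with the monotonicity of the density under inclusion of domains.
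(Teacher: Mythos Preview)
Your proof is correct and follows essentially the same approach as the paper: fix $\al\in\Om$, rescale the subdisk $\De=\{|z-\al|<d(\al)\}$ to $\SD$, set $W(\ze)=\om(\al+d\ze)$ (the paper calls this $\psi$), use the comparison principle $\la_\De\geq\la_\Om$ to bound $\|W^*\|\leq\|\om^*\|$, and apply \eqref{ineq-hyp-norm} at $\ze=0$. The only difference is the order of exposition and that you spell out the identification of $W^*$ with the hyperbolic derivative of $\om|_\De$ in a bit more detail.
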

\begin{proof}
We proceed as in the proof of Proposition~\ref{general-Pommerenke}, by fixing $\al\in\Om$ and writing $d=d(\al), \De = \{z : |z-\al|<d\}$ and $z=\al+d\,\ze\in\De$, for $\ze\in\SD$. We set $\psi(\ze)=\om(z)$ and compute  
\begin{align*}
\|\psi^*\| \, & = \, \sup_{\ze\in\SD}\frac{|\psi'(\ze)|}{\la_\SD(\ze) \, (1-|\psi(\ze)|^2)} \\ 
& = \, \sup_{z\in\De}\frac{|\om'(z)|}{\la_\De(z) \, (1-|\om(z)|^2)} \\
&  \leq \,\sup_{z\in\Om}\frac{|\om'(z)|}{\la_\Om(z) \, (1-|\om(z)|^2)} \\
& = \, \| \om^*\|. 
\end{align*}
The proof is completed by applying inequality~\eqref{ineq-hyp-norm} for $\ze=0$ \mbox{to the function $\psi$.} 
\end{proof}

\subsection{Admissible dilatations} Let $\Om$ be a simply connected domain with $0\in\Om$ and let $\cA_t(\Om)$ and $\cA_t^0(\Om)$ denote the classes of admissible dilatations for mappings in $\cF_t(\Om)$ and $\cF_t^0(\Om)$, respectively. Let also 
$$
R_t(\Om) \, =\, \max_{\om\in\cA_t^0(\Om)}\|\om^*\|.  
$$
Applying the affine transformation \eqref{aff-trans} to a mapping $f\in \cF_t(\Om)$, as we have already seen, we can get a mapping $F= A_a f$ which, for an appropriate choice of $a\in\SD$, belongs to $ \in \cF^0_t(\Om)$. The dilatation of $F$ is $\om_F=\nu\vp_{\overline{a}}\circ\om$, for some $\nu\in\ST$. A straightforward computation can show that $|\om_F^*| = |\om^*|$, so that we have an alternative expression for $R_t$ given by 
\be \label{alt-R-la}
R_t(\Om) \,=\, \sup_{\om\in\cA_t(\Om)}\|\om^*\|. 
\ee
This was first observed in \cite[Lem.1]{CHM}. A compactness argument was used in \cite{HM15-2} to show that $R_t(\SD)\to0$ as $t\to0^+$. Here we will prove the following. 

\begin{lemma} \label{lem-R-lambda-Om}
It holds that $R_t(\Om)\leq 4 R_t(\SD)$ for all $t>0$. 
\end{lemma}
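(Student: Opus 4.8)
The plan is to reduce everything on $\Om$ to the unit disk via the Riemann map and use the chain rule for the hyperbolic derivative together with the Schwarz--Pick lemma. Concretely, let $\vp:\SD\to\Om$ be a Riemann map with $\vp(0)=0$ and $\vp'(0)>0$. Given $\om\in\cA_t(\Om)$, say $\om$ is the dilatation of some $f\in\cF_t(\Om)$, I would like to transport $\om$ to a dilatation on $\SD$. The natural candidate is $\psi=\om\circ\vp$, which is the dilatation of the harmonic mapping $F=\vp'(0)^{-1}f\circ\vp$ (up to the usual normalization); by the chain rule \eqref{eq-chain} and the computation in \eqref{Riemann-Schwarzian} we have $\|S_F\|_\SD\leq\|S_f\|_\Om+\|S\vp\|_\SD\leq t+6$ by Kraus' theorem. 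So $\psi\in\cA_{t+6}(\SD)$, but that is the wrong index: I want to land in $\cA_t(\SD)$, not $\cA_{t+6}(\SD)$. This is the crux of the difficulty, and I expect the actual argument to avoid passing through Schwarzians of compositions altogether.

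Instead I would argue directly with hyperbolic derivatives, which behave far better under composition than Schwarzians. By \eqref{alt-R-la} it suffices to bound $\|\om^*\|$ for an arbitrary $\om\in\cA_t(\Om)$. Fix a point $z_0\in\Om$; I must estimate $|\om^*(z_0)|=|\om'(z_0)|/\big(\la_\Om(z_0)(1-|\om(z_0)|^2)\big)$. The idea is to compose $\om$ with a conformal self-map to move $z_0$ to the basepoint and then recognize the resulting function as (close to) an admissible dilatation on $\SD$. More precisely, one should be able to realize the local behavior of $\om$ near $z_0$ by pulling back along a suitable conformal map into $\SD$: there is a conformal embedding of a disk $\De=\{|z-z_0|<d(z_0)\}\subset\Om$ into $\SD$, and by the Schwarz--Pick comparison $\la_\De\geq\la_\Om$ on $\De$, so estimates made on $\De$ dominate those on $\Om$. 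On $\De$, after the affine change of variable $z=z_0+d(z_0)\ze$, the function $\psi(\ze)=\om(z)$ maps $\SD\to\SD$ and is the dilatation of a harmonic mapping whose Schwarzian norm on $\SD$ I can control — but again only up to the additive constant $6$ from $\|S\vp\|$, or rather the Schwarzian of the disk-to-$\Om$ local map, which here is just an affine map and has Schwarzian zero. That is the key point: a \emph{disk} sitting inside $\Om$ enters $\Om$ by an affine (Schwarzian-zero) map, so the pulled-back mapping $\tilde f(\ze)=f(z_0+d(z_0)\ze)$ (suitably normalized) satisfies $\|S_{\tilde f}\|_\SD=\|S_f\|_\Om\leq t$, whence $\psi=\om\circ(\text{affine})\in\cA_t(\SD)$ and $\|\psi^*\|\leq R_t(\SD)$.

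It then remains to compare $|\om^*(z_0)|$ with $\|\psi^*\|$. Writing $z=z_0+d(z_0)\ze$ we have $\psi'(\ze)=d(z_0)\om'(z)$ and $\la_\SD(\ze)=d(z_0)\la_\De(z)$, so $|\psi^*(0)|=|\om'(z_0)|/\big(\la_\De(z_0)(1-|\om(z_0)|^2)\big)$, and since $\la_\De(z_0)=1/d(z_0)$ while $\la_\Om(z_0)\leq\la_\De(z_0)$ by the comparison principle, we get $|\om^*(z_0)|\leq |\psi^*(0)|\cdot\big(\la_\De(z_0)/\la_\Om(z_0)\big)$. The factor $\la_\De(z_0)/\la_\Om(z_0)=1/\big(d(z_0)\la_\Om(z_0)\big)$ is exactly controlled by the quasidisk/Koebe distortion estimate: for a simply connected $\Om$ one has $\tfrac14\leq d(z)\la_\Om(z)\leq 1$, hence $\la_\De(z_0)/\la_\Om(z_0)\leq 4$. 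Combining, $|\om^*(z_0)|\leq 4\|\psi^*\|\leq 4R_t(\SD)$, and taking the supremum over $z_0\in\Om$ and over $\om\in\cA_t(\Om)$ gives $R_t(\Om)\leq 4R_t(\SD)$. The main obstacle, as flagged above, is the bookkeeping that shows the pulled-back dilatation genuinely lands in $\cA_t(\SD)$ with the \emph{same} index $t$ — this works precisely because the disk $\De$ embeds in $\Om$ by an affine map of zero Schwarzian, so no Kraus-type loss occurs — and the only quantitative input beyond that is the elementary two-sided bound $d(z)\la_\Om(z)\in[\tfrac14,1]$ on simply connected domains.
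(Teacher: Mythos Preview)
Your argument is correct and follows essentially the same route as the paper: restrict $f$ to the maximal disk $\De=\{|z-z_0|<d(z_0)\}\subset\Om$, pull back by the affine map (whose Schwarzian vanishes) so that the normalized map lands in $\cF_t(\SD)$ and its dilatation in $\cA_t(\SD)$, and then compare hyperbolic derivatives at the center via the Koebe bound $d(z_0)\la_\Om(z_0)\geq\tfrac14$. One small slip: your equality $\|S_{\tilde f}\|_\SD=\|S_f\|_\Om$ should be the inequality $\|S_{\tilde f}\|_\SD\leq\|S_f\|_\Om$, since the affine change of variable only identifies $\|S_{\tilde f}\|_\SD$ with $\sup_{z\in\De}|S_f(z)|/\la_\De(z)^2$, and one then uses both $\De\subset\Om$ and $\la_\De\geq\la_\Om$ to bound this by $\|S_f\|_\Om$; this is precisely the direction you need, so the proof goes through unchanged.
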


An immediate consequence is that $R_t(\Om)\to0$ as $t\to0^+$, and this fact is an important ingredient in the proofs of Theorems~\ref{thm-harmonic-in-rad} and \ref{thm-qc-extension}. For the proof of Lemma~\ref{lem-R-lambda-Om} we need to recall the inequalities 
\be \label{Koebe-comparison}
\frac{1}{4} \, \leq \, d(z) \la_\Om(z) \, \leq \, 1, \qquad z\in\Om, 
\ee
where $d(z)=dist(z,\partial\Om)$, which amount to Koebe's 1/4-theorem and the comparison principle; see \cite[Ch.I, \S1.1]{Leh}.

\begin{proof}[Proof of Lemma \ref{lem-R-lambda-Om}]
Fix $t>0$ and consider an extremal mapping $f=h+\overline{g}$ in $\cF^0_t(\Om)$, with dilatation $\om$, for which $R_t(\Om) \,=\,\|\om^*\|$. There exists a sequence of points $\{z_n\}$ in $\Om$ for which $\|\om^*\|=\lim_{n\to\infty}|\om^*(z_n)|$. Let $r_n=d(z_n)$ and consider the disks $\De_n =\{z : |z-z_n|<r_n\}$. Hereafter we use the notation $\ze \in\SD$ and $z=z_n+r_n \ze \in \De_n$. Note that $r_n \, \la_{\De_n}(z)=\la_\SD(\ze)$ and that $\la_{\De_n}(z) \geq \la_\Om(z)$, for $z\in\De_n$, since $\De_n \subset \Om$. Let
$$
F_n(\ze) \, = \, \frac{f(z)-f(z_n)}{r_n \, h'(z_n)}
$$
and compute its dilatation $\om_n(\ze) = \mu_n \,\om(z)$, where $\mu_n=h'(z_n)/\overline{h'(z_n)}\in\ST$. Compute also $S_{F_n}(\ze)=r_n^2 S_f(z)$. We have that 
$$
 \frac{|S_{F_n}(\ze)|}{\la_\SD(\ze)^2}\, = \, \frac{|S_f(z)|}{\la_{\De_n}(z)^2} \, \leq \, \frac{|S_f(z)|}{\la_{\Om}(z)^2}, 
$$
so that 
$$
\|S_{F_n}\|_\SD \, \leq \,\|S_f\|_{\Om}  \, \leq \, t
$$
and, therefore, we find that $F_n$ belongs to $\cF_t(\SD)$. Recalling the expression \eqref{alt-R-la} we have that $\|\om_n^*\| \leq R_t(\SD)$. A calculation shows that
$$
\om_n^*(\ze) \, = \, \frac{\om_n'(\ze) }{\la_\SD(\ze)(1-|\om_n(\ze)|^2)} \, = \, \frac{\mu_n \, \om'(z) }{\la_{\De_n}(z)(1-|\om(z)|^2)} \, = \,  \mu_n \, \frac{\la_\Om(z)}{\la_{\De_n}(z)} \,\om^*(z)
$$
and, in particular, that 
$$
|\om_n^*(0)| \, = \,  r_n \, \la_\Om(z_n) \, |\om^*(z_n)|.
$$
In view of Koebe's 1/4-theorem \eqref{Koebe-comparison} we have that $ r_n \, \la_\Om(z_n) \geq 1/4$. Therefore, 
$$
|\om^*(z_n)| \, \leq \, 4|\om_n^*(0) | \, \leq \,4 \|\om_n^*\| \, \leq \, 4 R_t(\SD). 
$$
The proof is completed upon letting $n\to\infty$. 

\end{proof}

\section{Proof of Theorem~\ref{thm-harmonic-in-rad}}

Without loss of generality we may assume that $0\in\Om$ and that \mbox{$f \in \cF_t(\Om), t\geq0$.} By Proposition~\ref{general-Sf-iff-Sh} we have that $\|Sh\|_\Om <\infty$. Moreover, since the classes $\cF_t(\Om)$ are nested and increasing with $t$ we have that $\|Sh\|_\Om$ is uniformly bounded for $f \in \cF_t(\Om)$ and small $t$, that is, $\|Sh\|_\Om \leq M$ for some constant $M>0$ and, say, $t\leq1$. From the expression \eqref{Sch-HM} we get 
$$
|Sh| \, \leq \, |S_f| + \frac{|\om'|}{1-|\om|^2} \left| \frac{h''}{h'}\right|  + \frac{|\om''|}{1-|\om|^2} + \frac{3}{2} \left( \frac{|\om'|}{1-|\om|^2} \right)^2. 
$$
 Propositions~\ref{general-Pommerenke} and \ref{prop-general-omega}, along with Koebe's 1/4-theorem \eqref{Koebe-comparison} yield
\begin{align*}
\frac{|Sh(z)|}{\la_\Om(z)^2} \,  & \leq \, \|S_f\|_\Om +  \frac{2 |\om^*(z)| }{d(z) \la_\Om(z) } \sqrt{1+\frac{\|Sh\|_\Om}{2}}+ \frac{C\| \om^*\|}{d(z)^2 \la_\Om(z)^2 } + \frac{3}{2}|\om^*(z)|^2 \\
&\leq \, t + 8 \| \om^*\| \sqrt{1+\frac{M}{2}}   + 16C\| \om^*\| + \frac{3}{2}\| \om^*\|^2.  
\end{align*}
Since $\| \om^*\|\leq1$ we get that 
$$
\|Sh\|_\Om \,  \leq \, t + \hat{C} R_t(\Om) 
$$
for some constant $\hat{C}>0$. Hence, $\|Sh\|_\Om \to 0$ as $t\to0^+$, in view of Lemma~\ref{lem-R-lambda-Om}. Since $\si(\Om)>0$ by Theorem~\ref{thm-inner-radius-Ah-Ge}, there exists $t_0>0$ for which $\|Sh\|_\Om < \si(\Om)$, so that $h$ is univalent in $\Om$. Moreover, $h$ has a quasiconformal extension to $\overline{\SC}$ by Theorem~\ref{thm-inner-radius-Ah-Ge}, which we will denote by $\widetilde{h}$.

Using the affine invariance of $\cF_t(\Om)$, the above calculation can be repeated for the transform $A_a f$, given in \eqref{aff-trans}, for any $a\in\SD$. Thus, the analytic part of $A_a f$ and, therefore, $h_a=h+ag$ is univalent in $\Om$ for every $a\in\SD$. Letting $|a|\to1^-$ and using Hurwitz' theorem we get that $h_a$ is univalent in $\Om$ for every $a\in\ST$, since it can not be constant due to its normalization $h_a'(0)=1+ag'(0)\neq0$. We show that $f$ is injective in $\Om$ by contradiction.  Let $z_1, z_2\in \Om$ be distinct, and such that $f(z_1)=f(z_2)$. Since $h$ is injective, we have that $h(z_1)\neq h(z_2)$. Setting $\t=\arg\big(  h(z_1)- h(z_2) \big)$ we see that 
\be\label{1-1-z_12}
\SR \ni e^{-i\t} \big(h(z_1)- h(z_2)\big) =  e^{-i\t} \big(\overline{g(z_2)}- \overline{g(z_1)}\big) = e^{i\t} \big(g(z_2)- g(z_1)\big), 
\ee
from which we get that $h+e^{2i\t}g$ is not injective, a contradiction.

Moreover, $h_a$ has a quasiconformal extension $\widetilde{h_a}$ to $\overline{\SC}$ for every $a\in\SD$. In view of Theorem 5.3 in \cite[Ch.II, \S 5.4]{LV}, for every $a\in\ST$ the limit function $\widetilde{h_a}$ is either constant, or takes two values, or is quasiconformal. The first two cases are discarded by the normalization at the origin and, therefore, $\widetilde{h_a}$ is quasiconformal in $\overline{\SC}$ for every $a\in\overline{\SD}$. Now we define 
$$
\widetilde{f} =\widetilde{h}+\overline{\widetilde{g}},
$$ 
where $\widetilde{h}=\widetilde{h_0}$ and $\widetilde{g}=\widetilde{h_1}-\widetilde{h_0}$. It is clear that $\widetilde{f}$ is a continuous (with respect to the spherical metric) extension of $f$ to $\overline{\SC}$. It remains to show that $\widetilde{f}$ is injective in $\overline{\SC}$, since the continuity of $\widetilde{f}^{-1}$ may then be obtained by a general result, see Theorem~5.6 in \cite[\S3-5]{Mu}.

We will argue again by contradiction. Let $z_1, z_2\in\overline{\SC}$ be distinct, and such that $\ell=\widetilde{f}(z_1)=\widetilde{f}(z_2)\in\overline{\SC}$. Since $\widetilde{h}$ is injective, we have that $\widetilde{h}(z_1)\neq\widetilde{h}(z_2)$. If both $\widetilde{h}(z_1)$ and $\widetilde{h}(z_2)$ are finite then we proceed as in \eqref{1-1-z_12}, setting $\theta=\arg\big(  \widetilde{h}(z_1)- \widetilde{h}(z_2) \big)$ and seeing that $\widetilde{h}+e^{2i\t}\widetilde{g}$ is not injective, a contradiction. If one of $\widetilde{h}(z_1)$ and $\widetilde{h}(z_2)$ is finite and the other is infinite, we may assume without loss of generality that $\widetilde{h}(z_1)=\infty$ and $\widetilde{h}(z_2)\neq\infty$. Let $\ga_\t$ be the pre-image of the ray $\{Re^{i\t} : R\geq0\}$, $\t\in[0,2\pi)$, of the function $\widetilde{h}$. Clearly, $\ga_\t$ is a simple curve with one endpoint at the origin and the other at $z_1$. We write 
$$
\widetilde{h}+\widetilde{g} = \overline{\widetilde{f}} + \widetilde{h} - \overline{\widetilde{h}}
$$ 
and see that for $z\in\ga_\t$ we have that $\widetilde{h}(z)-\overline{\widetilde{h}}(z) = R e^{i\t}(1-e^{-2i\t})$. We distinguish two cases: $\ell$ (the common value of $\widetilde{f}$ at $z_1$ and $z_2$) being finite or infinite. If $\ell \neq \infty$ then 
$$
\lim_{\ga_\t \ni z\to z_1}\widetilde{h}(z)+\widetilde{g}(z) = 
\left\{
\begin{array}{rl}
\overline{\ell}, & \quad \text{if} \quad \t=0, \pi,\\
 \infty, & \quad \text{if} \quad \t\neq 0,\pi, 
\end{array} \right.
$$
which is a contradiction since $ \widetilde{h}+\widetilde{g} $ is continuous in $\overline{\SC}$. If $\ell = \infty$ we take limit when $z\to z_1$ along $\ga_0$ (so that $ \widetilde{h}(z)=R$) in order to compute that $(\widetilde{h}+\widetilde{g})(z_1)  = \infty$. But, on the other hand, we have that $(\widetilde{h}+\widetilde{g})(z_2)= \infty$, since $ \widetilde{h}(z_2) \neq \infty$ and $\widetilde{g}(z_2) = \infty$. This is a contradiction because $\widetilde{h}+\widetilde{g}$ is injective.

\section{Proof of Theorem~\ref{thm-qc-extension}} \label{sect-qc-ext}
 
 Let $\vp:\SD\to\Om$ be a Riemann map of $\Om$ with $\vp(0)=0$ and consider $\Om_r = \vp(\{|z|<r\})$ for $r<1$. We set $\ga_r = \partial \Om_r$ and note that, since $\Om$ is a $\widehat{K}$-quasidisk for some $\widehat{K}\geq1$, it follows (trivially) that $\ga_r$ is a $\widehat{K}$-quasicircle for every $r<1$. We also set $\Gamma_r = f(\ga_r)$ and claim that it is a $K$-quasicircle, with $K\geq1$ independent of $r$. Once we prove this claim we may then consider $\la_r$ and $\Lambda_r$ to be $\widehat{K}$- and $K$-quasiconformal reflecions across $\ga_r $ and $\Gamma_r$, respectively, and setting 
$$
\widetilde{f_r}(z)  \,=\, \left\{
\begin{array}{rl}
f(z),   & \quad \text{if} \quad z\in\overline{\Om_r},\\
\Lambda_r \circ f \circ \la_r(z) , & \quad \text{if} \quad z\in\overline{\SC}\backslash\overline{\Om_r}, 	
\end{array} \right.
$$
we see that this is a $\le(K\,\frac{1+d}{1-d}\,\widehat{K}\ri)$-quasiconformal mapping in the Riemann sphere. Letting $r\to1$, and in view of Theorem 5.3 in \cite[Ch.II, \S 5.4]{LV}, we obtain the desired quasiconformal extension of $f$ to $\overline{\SC}$.

Let $w_j\in\Gamma_r, j=1,2,3,4$, be distinct points. Our claim that $\Gamma_r$ is a $K$-quasicircle, with $K\geq1$ independent of $r$, will be proved by showing that the cross-ratio of the points $w_j$ is bounded by a uniform constant, independent of $r$. Since $f$ is injective in $\Om$ by Theorem~\ref{thm-harmonic-in-rad}, there exist exactly four points $z_j\in\ga_r$ for which $w_j=f(z_j)$. For convenience, we will write $h_j=h(z_j)$ and $g_j=g(z_j)$. We have
$$
w_i-w_j = h_i-h_j  + \overline{g_i}- \overline{g_j} =  (h_i-h_j) (1+\mu_{ij}A_{ij}), 
$$ 
where $A_{ij}= \frac{ g_i-g_j }{ h_i-h_j }$ and $\mu_{ij}=\frac{\overline{g_i}- \overline{g_j}}{g_i-g_j } \in \ST$ (we may set $\mu_{ij}=1$ if $g_i=g_j$). We have that 
\be\label{cross-proof}
|(w_1,w_2,w_3,w_4)| = |(h_1,h_2,h_3,h_4)| \le| \frac{ (1+\mu_{12}A_{12}) (1+\mu_{34}A_{34}) }{ (1+\mu_{13}A_{13}) (1+\mu_{24}A_{24}) } \ri|. 
\ee
Since $h(\Om)$ is a quasidisk (in view of the proof of Theorem~\ref{thm-harmonic-in-rad}), we have that 
$$
|(h_1,h_2,h_3,h_4)|\leq M
$$
for some absolute constant $M\geq0$. 

We saw in the course of the proof of Theorem~\ref{thm-harmonic-in-rad} that $h_a=h+ag$ is univalent in $\Om$ for every $a\in\overline{\SD}$. We will now expand the range of $|a|$ for which this holds. 

Let $\de\in(1,1/d)$ and consider $1<|a|\leq\de$. We compute $h_a' = h'(1+a\om)$ and 
$$
\frac{h_a''}{h_a'} = \frac{h''}{h'} + \frac{a\om'}{1+a\om},
$$
so that
$$
Sh_a = Sh - \frac{a\om'}{1+a\om} \frac{h''}{h'}+ \frac{a\om''}{1+a\om} - \frac{3}{2}\le(\frac{a\om'}{1+a\om}\ri)^2.
$$
By formula \eqref{Sch-HM} and a straightforward computation we arrive at
$$
Sh_a = S_f + \frac{a+\overline{\om}}{1+a\om} \le[ \frac{\om''}{1-|\om|^2} - \frac{\om'}{1-|\om|^2} \frac{h''}{h'} + \frac{3}{2} \le(\frac{ \om'}{1-|\om|^2}\ri)^2\le(\overline{\om} - \frac{a(1-|\om|^2)}{1+a\om} \ri) \ri]. 
$$
In $\Om$, we have that 
$$
\le|\frac{a+\overline{\om}}{1+a\om}\ri| \leq \max_{|\ze|= d} \le|\frac{a+\overline{\ze}}{1+a\ze}\ri|  = \frac{|a|-d}{1-|a|d} \leq \frac{\de-d}{1-\de d}=C,  
$$
where the first of these inequalities follows from the maximum principle. Moreover, we have that
$$
\le|\overline{\om} - \frac{a(1-|\om|^2)}{1+a\om} \ri| \leq d + \frac{|a|(1-|\om|^2)}{1-|a\om|} \leq d + \frac{|a|(1-d^2)}{1-|a|d}\leq d + \frac{\de(1-d^2)}{1-\de d}=C'. 
$$
Hence, we get that  
$$
|Sh_a| \leq |S_f| + C \le[ \frac{|\om''|}{1-|\om|^2} + \frac{|\om'|}{1-|\om|^2} \le|\frac{h''}{h'}\ri| + \frac{3C' }{2} \le(\frac{ |\om'|}{1-|\om|^2}\ri)^2  \ri]. 
$$
We assume that $f \in \cF_t(\Om)$, for $t\geq0$, and, working as in the proof of Theorem~\ref{thm-harmonic-in-rad}, we use Propositions~\ref{general-Pommerenke}, \ref{prop-general-omega} and Koebe's 1/4-theorem \eqref{Koebe-comparison} in order to obtain
$$
\|Sh_a\|_\Om \,  \leq \, t + \hat{C} R_t(\Om) 
$$
for some constant $\hat{C}>0$. Choosing $t_0>0$ so that $t_0 + \hat{C} R_{t_0}(\Om) = \si(\Om)$ we conclude that $h_a$ is univalent.

We fix $\al\in\Om$ and consider the generalized dilatation 
$$
\psi_\al(z)  \,=\, \left\{
\begin{array}{rl}
\frac{g(z)-g(\al)}{h(z)-h(\al)},   & \quad \text{if} \quad z\in\Om\backslash \{\al\},\\
\om(\al), & \quad \text{if} \quad z=\al.  
\end{array} \right.
$$
Since $h$ is injective in $\Om$ it is clear that $\psi_\al$ is holomorphic in $\Om$. We claim that 
$$
S=\max_{z\in\overline{\Om_r}} |\psi_\al(z) | \leq \frac{1}{\de}. 
$$
Note that $|\om(\al)|\leq d<1/\de$ so that if, otherwise, $S>1/\de$, then there would exist a point $z_0\in\Om\backslash\{\al\}$ for which $\psi_\al(z_0)=e^{i\t}/\de$, for some $\t\in\SR$. This shows that the values of the function $h-e^{-i\t}\de g$ at the points $\al$ and $z_0$ would coincide, which is a contradiction since this is a univalent function. 

Hence we may bound the terms in \eqref{cross-proof} as $|A_{ij}|\leq 1/\de$ in order to obtain 
$$
|(w_1,w_2,w_3,w_4)| \leq M  \frac{ (1+|A_{12}|) (1+|A_{34}|) }{ (1-|A_{13}|) (1-|A_{24}|) } \leq M \le( \frac{1+1/\de}{1-1/\de}\ri)^2, 
$$
with which we finish the proof.

\section{Finitely connected domains} \label{sect-fin-conn}  

Let $\Om$ be a domain in $\overline{\SC}$. A collection $\mathfrak{D}$ of domains $D\subset\Om$ is called a quasiconformal decomposition of $\Om$ if each $D$ is a quasidisk and any two points $z_1,z_2\in \Om$ lie in the closure of some $D\in\mathfrak{D}$. This definition along with the following covering lemma were given by Osgood in \cite{O80}.

\begin{lemmO}[\cite{O80}] \label{lem-qc-decomp}
If $\Om$ is a finitely connected domain and each component of $\partial \Om$ is either a point or a quasicircle then $\Om$ is quasiconformally decomposable. 
\end{lemmO}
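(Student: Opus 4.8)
The plan is to derive the lemma from two facts: that such a domain $\Om$ is a \emph{uniform domain} in the sense of Martio and Sarvas \cite{MS79}, and that every uniform domain admits a quasiconformal decomposition. All distances below are taken in the spherical metric and $d(z)=\operatorname{dist}(z,\partial\Om)$.

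First I would record the structure of $\Om$. Let $E_1,\dots,E_n$ be the boundary components; being finitely many components of the compact set $\partial\Om$, they are pairwise disjoint compact sets, and hence $\de_0:=\min_{i\neq j}\operatorname{dist}(E_i,E_j)>0$. For each $i$ let $V_i$ be the component of $\overline{\SC}\setminus E_i$ that contains $\Om$; thus $V_i$ is a quasidisk (with its own quasiconformal constant) when $E_i$ is a quasicircle, and $V_i=\overline{\SC}\setminus E_i$ when $E_i$ is a point. A standard topological argument for finitely connected domains gives $\Om=\bigcap_{i=1}^{n}V_i$.

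Next I would show that $\Om$ is uniform. Each quasidisk is a uniform domain with constant depending only on its quasiconformal constant, and $\overline{\SC}$ minus a point is uniform as well; see \cite[\S3.5]{GeHa}. Using the separation $\de_0$ one upgrades this to the intersection: within the collar $\{z:\operatorname{dist}(z,E_i)<\de_0/3\}$ the domain $\Om$ agrees with $V_i$, so two points lying in a common collar may be joined by a uniform curve inside $V_i\subset\Om$; two points with $d(z_1),d(z_2)\geq\de_0/3$ lie in the ``thick'' part of $\Om$ and can be joined by a short curve staying at a definite distance from $\partial\Om$; and a general pair is treated by concatenating finitely many such pieces, the number and the constants being controlled because there are only $n$ components. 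This produces a uniformity constant $c$ for $\Om$ depending only on $n$ and on a bound for the quasiconformal constants of the components. (Alternatively, one may invoke directly the fact recorded in the introduction, that a finitely connected domain has this boundary property precisely when it is uniform.)

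Finally, given $z_1,z_2\in\Om$, I would join them by a $c$-uniform curve $\ga$ and take the ``cigar'' $D=\bigcup_{z\in\ga}B\big(z,\tfrac{1}{2c}\min\{\ell(\ga[z_1,z]),\ell(\ga[z,z_2])\}\big)$; the uniform condition forces each such ball to lie in $\Om$, so $D\subset\Om$, and a standard argument (see again \cite[\S3.5]{GeHa}) shows that $D$ is a quasidisk whose quasiconformal constant depends only on $c$, with $z_1,z_2\in\overline D$. The collection $\mathfrak D$ of all such cigars is then a quasiconformal decomposition of $\Om$. The main obstacle is the middle step: verifying the uniform condition with constants that do not degenerate, the delicate point being the passage between the collar regions near the individual $E_i$ and the thick interior, and the control of the concatenation when a connecting curve must run past several boundary components. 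The remaining steps are topological or geometric routine, granted the quoted properties of quasidisks and uniform domains.
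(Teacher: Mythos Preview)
Your sketch is essentially correct, but it follows a different route from the one the paper relies on. The paper does not prove Lemma~\ref{lem-qc-decomp} at all; it cites Osgood~\cite{O80}, noting only that ``the proof in \cite{O80} provides an explicit finite decomposition.'' Osgood's argument is hands-on: he cuts $\Om$ along finitely many auxiliary crosscuts into finitely many simply connected pieces, each of which is a quasidisk, arranged so that any pair of points of $\Om$ lies in the closure of one of them. Your approach instead passes through the uniform-domain characterization and then produces, for each pair $z_1,z_2$, a cigar neighborhood of a uniform arc; this yields an \emph{infinite} decomposition $\mathfrak{D}$, but with the quasiconformal constants of all the cigars bounded in terms of the uniformity constant alone.

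Both routes suffice for the application in Section~\ref{sect-fin-conn}. Osgood's finite $\mathfrak{D}$ makes the step $0<c\leq\min_{D\in\mathfrak{D}}\si_H(D)$ in the proof of Theorem~\ref{thm-finit-conn} immediate. Your cigar decomposition requires the extra observation---which you do make---that the quasiconformal constant of each cigar depends only on $c$, so that (via Theorem~\ref{thm-harmonic-in-rad}, where the constant depends only on $\si(\Om)$, hence only on this quasiconformal constant) one still gets $\inf_{D\in\mathfrak{D}}\si_H(D)>0$. The middle step in your sketch, proving that $\Om$ is uniform with controlled constants, is the genuine content; your suggested collar-plus-thick-part concatenation is the right idea, and invoking the known equivalence (Martio--Sarvas~\cite{MS79}, or \cite[\S3.5]{GeHa}) is a legitimate shortcut. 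One small point: when $E_i$ is a single point, $V_i=\overline{\SC}\setminus\{E_i\}$ has no nondegenerate boundary component, so ``uniform'' there is a degenerate statement; you should check that the collar argument near such $E_i$ still gives what you need, or simply note that an isolated boundary point can be absorbed into a neighboring quasidisk without affecting uniformity.
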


The proof in \cite{O80} provides an explicit finite decomposition.

\begin{proof}[Proof of Therorem~\ref{thm-finit-conn}]
We prove the direction (i)$\Rightarrow$(iii). The domain $\Om$ is quasiconformally decomposable by a collection $\mathfrak{D}$ in view of Lemma~\ref{lem-qc-decomp}. By Theorem~\ref{thm-harmonic-in-rad}, each of the quasidisks $D$ in $\mathfrak{D}$ has positive harmonic inner radius. Let
$$
0<c\leq \min_{D\in\mathfrak{D}} \si_H(D)
$$ 
and consider $f$ to be a harmonic mapping in $\Om$ that satisfies $\|S_f\|_\Om\leq c$. If $f(z_1)=f(z_2)$ for two distinct points $z_1,z_2$ in $\Om$ then $z_1,z_2\in\overline{D}$, for some quasidisk $D$ from the collection $\mathfrak{D}$. The domain monotonicity for the hyperbolic metric shows that $\la_D(z)\geq \la_\Om(z)$ for all $z\in D$ and, therefore, that 
$$
\|S_f\|_D \leq \|S_f\|_\Om\leq c. 
$$ 
But now the homeomorphic extension of Theorem~\ref{thm-harmonic-in-rad} shows that if $c$ is sufficiently small then $f$ is injective up to the boundary of $D$, a contradiction. 
\end{proof}

We note that if we strengthen the definition of quasiconformal decomposition so that any two points $z_1,z_2\in \Om$ lie in some quasidisk $D$ (not its closure) from a collection $\mathfrak{D}$, then the construction in \cite{O80} can be modified so that Lemma~\ref{lem-qc-decomp} still holds. Had we followed this line of reasoning then we would not need the homeomorphic extension of Theorem~\ref{thm-harmonic-in-rad}, but only the univalence criterion from its statement.

\vskip.3cm
\emph{Acknowledgements}. I would like to thank professor Martin Chuaqui for his insightful comments in our many discussions.


\begin{thebibliography}{99}
\bibitem{Ah} L. Ahlfors, Quasiconformal reflections, \emph{Acta Math.} \textbf{109} (1963), 291-301.  

\bibitem{BG} A.F. Beardon, F.W. Gehring, Schwarzian derivatives, the Poincar\'e metric and the kernel function, \textit{Comment. Math. Helv.} \textbf{55} (1980), no.~1, 50-64.

\bibitem{BM} A. Beardon, D. Minda, The hyperbolic metric and geometric function theory, \emph{Quasiconformal mappings and their applications}, 9-56, Narosa, New Delhi, 2007.

\bibitem{CDO03} M. Chuaqui, P. Duren, B. Osgood, The Schwarzian derivative for harmonic mappings, \textit{J. Anal. Math.} \textbf{91} (2003), 329-351. 

\bibitem{CHM} M. Chuaqui, R. Hernández, M.J. Mart\'in, Affine and linear invariant families of harmonic mappings, \emph{Math. Ann.} \textbf{367} (2017), no.~3-4, 1099-1122.

\bibitem{Du3} P.L. Duren, \emph{Harmonic Mappings in the Plane}, Cambridge University Press, Cambridge, 2004.

\bibitem{Ge} F.W. Gehring, Univalent functions and the Schwarzian derivative, \emph{Comment. Math. Helv.} \textbf{52} (1977), no.~4, 561-572. 

\bibitem{GeHa} F.W. Gehring, K. Hag, \emph{The ubiquitous quasidisk}, Amer. Math. Soc., Providence, RI, 2012. 

\bibitem{GZ} P. Ghatage, D. Zheng, Hyperbolic derivatives and generalized Schwarz-Pick estimates, \emph{Proc. Amer. Math. Soc.} \textbf{132} (2004), no.~11, 3309-3318. 

\bibitem{HM13} R. Hern\'andez, M.J. Mart\'in, Quasiconformal extension of harmonic mappings in the plane, \textit{Ann. Acad. Sci. Fenn. Math.} \textbf{38} (2013), no.~2, 617-630. 

\bibitem{HM15} R. Hern\'andez, M.J. Mart\'in, Pre-Schwarzian and Schwarzian derivatives of harmonic mappings, \textit{J. Geom. Anal.} \textbf{25} (2015), no.~1, 64-91. 

\bibitem{HM15-2} R. Hern\'andez, M.J. Mart\'in, Criteria for univalence and quasiconformal extension of harmonic mappings in terms of the Schwarzian derivative, \textit{Arch. Math. (Basel)} \textbf{104} (2015), no.~1, 53-59. 

\bibitem{Ku88} R. K\"uhnau, M\"oglichst konforme Spiegelung an einer Jordankurve, \textit{Jahresber. Deutsch. Math.-Verein.} \textbf{90} (1988), no. 2, 90-109.

\bibitem{Leh} O. Lehto, \textit{Univalent functions and Teichm\"uller spaces}, Graduate Texts in Mathematics, vol.~109, Springer-Verlag, New York, 1987.  

\bibitem{LV} O. Lehto, K.I. Virtanen, \emph{Quasiconformal mappings in the plane}, second edition, Springer-Verlag, New York-Heidelberg, 1973.  

\bibitem{MS79} O. Martio, J. Sarvas, Injectivity theorems in plane and space, \emph{Ann. Acad. Sci. Fenn. Ser. A I Math.} \textbf{4} (1979), no. 2, 383-401. 

\bibitem{Mu} J.R. Munkres, \emph{Topology, a first course}, Prentice-Hall, Inc., Englewood Cliffs, N.J., 1975. 

\bibitem{O80} B. Osgood, Univalence criteria in multiply-connected domains, \emph{Trans. Amer. Math. Soc.} \textbf{260} (1980), no.~2, 459-473. 

\bibitem{Po64} Ch. Pommerenke, Linear-invariante Familien analytischer Funktionen I, \emph{Math. Ann.} \textbf{155} (1964), no.~2, 108-154. 
\end{thebibliography}
\end{document}